\documentclass{article}
\usepackage[utf8]{inputenc}
\usepackage{amsfonts, amsmath, amssymb, amsthm, authblk, graphicx, enumitem}
\usepackage{cite}
\usepackage[colorlinks=true,urlcolor=blue, citecolor=red,linkcolor=blue]{hyperref}
\usepackage[left=2cm, right=2cm, top=3cm]{geometry}
\usepackage[capitalise]{cleveref}
\newtheorem{theorem}{Theorem}
\newtheorem{acknowledgement}[theorem]{Acknowledgement}

\newtheorem{definition}[theorem]{Definition}

\newtheorem{lemma}[theorem]{Lemma}
\newtheorem{proposition}[theorem]{Proposition}
\newtheorem{remark}[theorem]{Remark}

\usepackage{dsfont}

\renewcommand{\d}{\displaystyle}
\usepackage{graphicx}
\usepackage{apptools}
\AtAppendix{\counterwithin{lem}{section}}
\newtheorem{lem}{Lemma}
\AtAppendix{\counterwithin{teoA}{section}}
\newtheorem{teoA}{Theorem}
\AtAppendix{\counterwithin{defA}{section}}
\newtheorem{defA}{Definition}
\newcommand{\pts}[1]{\left(#1\right)}  
                                  	  
\newcommand{\lvs}[1]{\left\{#1\right\}}

\newcommand{\C}{\mathbb{C}} 
\newcommand{\R}{\mathbb{R}}
\newcommand{\A}{\mathcal{A}}
\newcommand{\al}{\alpha}

\begin{document}
\title{Exact boundary controllability of a singular/degenerate wave equation via singular Sturm–Liouville theory
}

\author[1]{Marcos L\'opez-Garc\'ia \thanks{marcos.lopez@im.unam.mx}}
\affil[1]{Unidad Cuernavaca, Instituto de Matem\'aticas, Universidad Nacional Aut\'onoma de
M\'exico, M\'exico.}

\maketitle

\hfill\textit{In memory of my sister Chayo}
\begin{abstract}
We prove exact boundary controllability for a class of one-dimensional
singular/degenerate wave equations of Sturm--Liouville type,
\[
u_{tt}-(x^\alpha u_x)_x-\beta x^{\alpha-1}u_x-\mu x^{\alpha-2}u=0,
\qquad x\in(0,1),
\]
with a Dirichlet condition at the regular endpoint and a boundary control acting at the singular endpoint \(x=0\). The analysis is carried out in the fractional energy space %
\[
X=\mathcal H^{\nu+1/2}\times \mathcal H^{\nu-1/2},
\]
associated with the corresponding singular Sturm--Liouville operator.
Using the spectral decomposition induced by Bessel functions, we establish
admissibility of the boundary observation operator and derive precise lower estimates
for the observation coefficients.

Exact observability is obtained through Ingham-type inequalities together
with an abstract observability result for unitary groups. The proof treats
simultaneously the subcritical, critical logarithmic, and limit-point
regimes by means of singular Sturm--Liouville theory and boundary traces identified through the Lagrange bracket of the singular Sturm--Liouville expression.
\end{abstract}
\medskip
\noindent\textbf{Keywords:}
singular Sturm--Liouville theory,
exact boundary controllability,
degenerate/singular wave equations,
critical logarithmic regime,
boundary traces,
Bessel functions,
Ingham inequalities.

\medskip
\noindent\textbf{MSC2020:}
93B05, 34B24, 35L05, 47A10, 35P15,

\section{Introduction and main result}
The controllability of degenerate evolution equations has attracted
significant attention during the last decades.
For one-dimensional degenerate wave equations, a seminal contribution was
obtained by Gueye \cite{gueye}, who proved exact boundary controllability
for weakly degenerate hyperbolic equations by means of spectral methods,
nonharmonic Fourier series, and Bessel function expansions.
A remarkable feature of Gueye's result is that the control acts at the
degenerate endpoint itself.

More recently, Fragnelli, Mugnai, and Sbai
\cite{fragne}
investigated boundary controllability for degenerate/singular hyperbolic
equations with drift terms and singular potentials.
Their approach is based on energy methods and observability estimates.
However, the control is imposed at the nondegenerate and nonsingular
endpoint.

A similar situation appears in recent controllability results for
higher-order degenerate equations, where the control is typically applied
through the regular boundary; see, for instance,
\cite{akil}. This reflects the additional analytical difficulties arising when the
control acts precisely at the singular or degenerate endpoint.

In degenerate and singular problems, the formulation of boundary
controllability is strongly tied to the choice of the self-adjoint
realization of the underlying differential operator.
In particular, boundary terms arising from formal integrations by parts
may be misleading unless the singular endpoint, the associated boundary
form, and the admissible traces are identified at the operator-theoretic
level. For this reason, we first develop the singular Sturm--Liouville framework
before introducing the observation and control operators.

Within this framework, the boundary control operator is not imposed
\emph{ad hoc}; rather, it is naturally determined by the Lagrange bracket
associated with the singular Sturm--Liouville expression.

The present work follows a spectral and functional-analytic approach based
on singular Sturm--Liouville theory.
In contrast with the above-mentioned works, the control acts at the
degenerate/singular boundary point.
Moreover, our framework simultaneously covers subcritical, critical
(logarithmic), and limit-point regimes within a unified setting.
The analysis relies on self-adjoint realizations of singular operators,
weighted boundary traces arising from Lagrange brackets, and
nonharmonic Fourier techniques leading to Ingham-type observability
inequalities.\\

Let $T > 0$, and consider $\alpha,\beta,\mu\in \mathbb{R}$ with $0\leq\alpha<2$. In this work we analyze the exact controllability of the following hyperbolic system,
\begin{equation}\label{problem}
\left\{\begin{aligned}
u_{tt}-(x^\alpha u_x)_x-\beta x^{\alpha -1}u_x-\frac{\mu}{x^{2-\alpha}} u&=0, & & \text { if } (x,t)\in Q:= (0, 1) \times (0, T ), \\
\pts{B_\nu u}(t)= f(t),\quad u(1, t)&=0, & & \text { if } t \in (0, T), \\
u(x, 0) =u_{0}(x), \quad u_t(x, 0) &=u_{1}(x)& & \text { if } x\in (0, 1),
\end{aligned}\right.
\end{equation}
by means of a control of the form $f(t)\in L^2(0,T)$, where $U_0=(u_0,u_1)\in X$, and $B_\nu$ is a (weighted) boundary control operator at the singular endpoint
$x=0$:
\[
(B_\nu u)(t):=
\begin{cases}
\displaystyle
\lim_{x\to0^+}
\dfrac{u(x,t)}
{x^\sigma \log(1/x)},
& \nu=0,
\\[2.2ex]
\displaystyle
\lim_{x\to0^+}
x^{-\sigma+\sqrt{\Delta}/2}\,u(x,t),
& \nu>0.
\end{cases}
\]
where $\nu, \sigma, \Delta$ depend on $\alpha,\beta,\mu$ and are defined in (\ref{para}).\\

The simultaneous presence of degeneracy, drift, and an inverse-square
potential changes the singular structure at the controlled endpoint
\(x=0\). As a consequence, the boundary observation mechanism can no
longer be treated through standard weighted traces alone and requires
a genuinely singular Sturm--Liouville analysis.

The parameter $\nu$ determines the singular behavior of solutions near the
endpoint $x=0$, as well as the corresponding self-adjoint boundary
conditions and weighted boundary traces associated with the singular
Sturm--Liouville operator. For \(\nu=0\), \(B_\nu\) corresponds to a critical logarithmic boundary trace, whereas for \(\nu>0\) it is the weighted trace associated with the non-principal coefficient at the singular endpoint.


Our main result establishes exact boundary controllability from the
degenerate/singular endpoint within a unified framework covering the
subcritical, critical logarithmic, and limit-point regimes. The proof relies on singular Sturm--Liouville theory,
spectral decomposition techniques, and nonharmonic Fourier analysis.

\begin{theorem}\label{main}
Let $0\le \alpha<2$, $\beta,\mu\in\mathbb R$ with $(1-\alpha-\beta)^2\ge4\mu$. Then system \eqref{problem} is exactly controllable in any time
\[
T>\frac{4}{2-\alpha}.
\]
More precisely, for every $U_0,U_T\in X$
there exists a control
$
f\in L^2((0,T);\mathbb C)
$
such that the corresponding weak solution of the controlled system (\ref{problem}) satisfies
$
(u(T),u_t(T))=U_T.
$
\end{theorem}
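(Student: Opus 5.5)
We use the Hilbert Uniqueness Method via duality: exact controllability of \eqref{problem} in $X$ in time $T$ is equivalent to exact observability of the adjoint homogeneous system. That system has $u(1,t)=0$ and the self-adjoint boundary condition at $x=0$ that kills the principal coefficient. The observation operator $C$ is the Lagrange-bracket trace conjugate to $B_\nu$, i.e.\ the coefficient of the principal solution.

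\textbf{Spectral reduction.} First we put the Sturm--Liouville expression in Liouville normal form. The Liouville transformation $u=x^{a}v$ together with the change of variable $y=x^{(2-\alpha)/2}$ reduces $-(x^\alpha u_x)_x-\beta x^{\alpha-1}u_x-\mu x^{\alpha-2}u=\lambda u$ to a Bessel equation of order
\[
\nu=\frac{\sqrt{\Delta}}{2-\alpha},\qquad \Delta=(1-\alpha-\beta)^2-4\mu\ge 0 .
\]
Hence the eigenfunctions of the Friedrichs/principal realization $A$ are $\Phi_n(x)=c_n x^{\sigma}J_\nu(j_{\nu,n}x^{(2-\alpha)/2})$, normalized in the weighted $L^2$ space. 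The eigenvalues are $\lambda_n=\kappa^2 j_{\nu,n}^2$ with $\kappa=(2-\alpha)/2$.

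The spaces $\mathcal H^s$ are then defined by the weights $\lambda_n^{s}$. In these coordinates the group $e^{t\mathcal A}$ on $X$ is diagonal, with frequencies $\pm\sqrt{\lambda_n}=\pm\kappa j_{\nu,n}$. Writing $U_0=\sum_n (a_n^+ +a_n^-)\ldots$, the observation takes the form
\[
CU(t)=\sum_{n}\bigl(a_n^+e^{i\kappa j_{\nu,n}t}+a_n^- e^{-i\kappa j_{\nu,n}t}\bigr)\gamma_n .
\]

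\textbf{Key steps.} (i) Compute $\gamma_n=C\Phi_n$ explicitly. This uses the small-argument asymptotics $J_\nu(z)\sim (z/2)^\nu/\Gamma(\nu+1)$ for the principal part, together with the Wronskian/Lagrange bracket against the non-principal solution. In the critical case $\nu=0$ the non-principal solution is $x^\sigma\log(1/x)$, which produces the logarithmic trace. This gives $|\gamma_n|\asymp c_n j_{\nu,n}^{\nu}$.

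(ii) Use $c_n\asymp \bigl(|J_{\nu+1}(j_{\nu,n})|\bigr)^{-1}\asymp j_{\nu,n}^{1/2}$, which follows from the standard asymptotics of $J_{\nu+1}$ at zeros of $J_\nu$. This yields two-sided bounds $|\gamma_n|\asymp \lambda_n^{(\nu+1/2)/2}$ after accounting for the normalization of $X=\mathcal H^{\nu+1/2}\times\mathcal H^{\nu-1/2}$. The choice of $X$ is precisely what makes $|\gamma_n|$ match the energy weights, so that $\|CU\|^2_{L^2(0,T)}$ is comparable to $\sum(|a_n^+|^2+|a_n^-|^2)$ in the normalized coordinates.

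(iii) Establish the gap. By McMahon's expansion, $j_{\nu,n+1}-j_{\nu,n}\to\pi$, and $j_{\nu,n}$ is increasing with a uniform gap. Hence the frequencies $\{\pm\kappa j_{\nu,n}\}$ have asymptotic gap $\kappa\pi$. There is also a gap $2\kappa j_{\nu,1}>0$ at the origin between $\pm$ branches, so the whole family has a positive uniform gap.

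(iv) Apply Ingham's inequality, in the Haraux/Beurling version that only needs the asymptotic gap. The direct (upper) inequality holds for every $T$, which gives admissibility of $C$. The inverse inequality holds for $T>2\pi/(\kappa\pi)=4/(2-\alpha)$. Combined with step (ii), this gives exact observability of the unitary group with admissible $C$, and hence exact controllability by the abstract duality theorem assumed earlier.

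\textbf{Main obstacle.} The delicate step is (i), together with the identification of $B_\nu$ as the adjoint of $C$ through the Lagrange bracket. We must show that the weak solution with boundary datum $f$ is defined by transposition in $X$. We must also check that the boundary term produced by Green's formula for the singular expression, paired against the principal solution, is exactly $f\cdot C\varphi$ with a nonzero constant. That constant depends on $\nu$ and differs in the case $\nu=0$ (the logarithmic Wronskian) and in the limit-point regime $\nu\ge1$, where no boundary condition is needed for self-adjointness but the bracket trace still exists. We would handle these cases uniformly by computing the bracket $[\,x^\sigma\psi,\Phi_n]$ at $0$, with $\psi$ the non-principal solution, via the explicit Bessel asymptotics. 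We would then verify that it is nonzero and of the stated growth in $n$.
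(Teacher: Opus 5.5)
Your proposal follows essentially the same route as the paper. It uses the Bessel eigenbasis $x^\sigma J_\nu(j_{\nu,k}x^{\kappa_\alpha})$ and explicit principal-coefficient traces giving $|\mathbb B_\nu^*(\Phi_k,0)|^2\asymp\lambda_k^{\nu+1/2}$ (Lemmas \ref{reduce}--\ref{boundtrac}). It then applies the upper Ingham inequality for admissibility and the Beurling/Ingham lower bound with asymptotic gap $\kappa_\alpha\pi$ for observability when $T>4/(2-\alpha)$ (Proposition \ref{observa}), and concludes with the Tucsnak--Weiss duality theorem.

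One wording slip: the self-adjoint condition at $x=0$ kills the \emph{non-principal} coefficient $[u,\phi_+](0)$, which is the one $B_\nu$ prescribes. The observation reads off the principal coefficient, as you correctly use afterwards.
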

A distinctive feature of the present work is that the observation
operator is not postulated a priori. Instead, it emerges naturally
from the singular Green identity and the asymptotic structure of
principal/non-principal solutions at the singular endpoint. This
provides a unified framework covering both limit-circle and
limit-point regimes and recovering the classical wave equation as
a particular case when $\alpha=\beta=\mu=0$.\\

The paper is organized as follows.
In Section \ref{2} we introduce the singular Sturm--Liouville framework, the
corresponding self-adjoint realizations, and the associated spectral
decomposition.
Section \ref{3} is devoted to the construction of the fractional spaces and to
the abstract wave dynamics generated by the singular operator.
Finally, Section \ref{4} establishes admissibility, exact observability, and
exact boundary controllability.


\section{Singular Sturm–Liouville framework}\label{2}
In this section we introduce the functional framework associated with the
singular Sturm--Liouville operator underlying equation~\eqref{problem}. We recall the basic notions of maximal and minimal operators,
self-adjoint realizations, and the Lagrange bracket. The notation and results from Sturm--Liouville theory used throughout this section can be found in~\cite{Zettl}.\\
The analysis developed below is also motivated by the fact that
the boundary observation structure appearing later in the control
problem is intrinsically determined by the singular behavior at
the endpoint \(x=0\).

\subsection{Functional setting}
First, for $\beta\in\mathbb{R}$ consider the weighted Lebesgue $L^2_{\beta}(0,1):=L^2((0,1);x^\beta\mathrm{d}x)$ with its inner product denoted by $\langle\cdot,\cdot\rangle_\beta$. Next, consider the differential expression $M$ defined by
\[Mu=-(pu_x)_x+qu\]
where $\d p(x) = x^{\al+\beta}, q(x) = -\mu x^{-2+\al+\beta}, w(x) = x^{\beta}$.\\

Observe that
\begin{equation*}\label{Acond2}
			1/p, q, w \in L_{\text{loc}}(0,1),\quad p,w >0\text{ on } (0,1),
\end{equation*} 
thus $Mu$ is defined a.e. for functions $u$ such that $u, pu_x\in AC_{\text{loc}}(0,1)$, where $AC_{\text{loc}}(0,1)$ is the space of all locally absolutely continuous functions in $(0,1)$.\\

For $\alpha,\beta,\mu\in \R$ consider the singular Sturm--Liouville operator $\mathcal{A}$ given by
\begin{equation}\label{A_ope}
\A u=\A_{\alpha,\beta,\mu} u:=w^{-1}Mu=-(x^\al u_x)_x-\beta x^{\al -1}u_x-\frac{\mu}{x^{2-\al}} u .
\end{equation}

Since $1/p,q,w\in L^1(1/2,1)$, the equation $\A u=\lambda u$ is regular at $x=1$ for any $\lambda\in \C$, see \cite[Definition 2.3.1, and (2.2.5)]{Zettl}. In this setting, we say that $x=1$ is a regular point.\\

We set
\begin{equation}\label{para}
\kappa_\alpha:=\frac{2-\alpha}{2},\,\,\Delta=\Delta(\alpha,\beta,\mu):=(1-\alpha-\beta)^2-4\mu,\,\,  \nu = \nu(\alpha,\beta,\mu):=\frac{\sqrt{\Delta}}{2\kappa_\alpha},\,\,\sigma=\sigma(\alpha,\beta):=\frac{1-\alpha-\beta}{2}.
\end{equation}

\noindent For $0\leq \al<2$, $\beta,\mu\in\R$ with \(\Delta\ge0\), we introduce the maximal domain,
\begin{equation*}\label{Dmax}
	D_{\max}:=\left\{u\in AC_{\text{loc}}(0,1)\, |\,pu_x\in AC_{\text{loc}}(0,1),\, u, \A u\in L^2_{\beta}(0,1)\right\}.
\end{equation*}
For $u,v \in D_{\max}$, the associated Lagrange bracket is defined by
\[
[u,v](x):=u(x)p(x)v'(x)-v(x)p(x)u'(x).
\]
The preminimal domain \(D_0 \) is defined by
\[
D_0=\{y\in D_{\max}: y \text{ has compact support in } (0,1)\}.
\]
The maximal and preminimal operators \(S_{\max}\) and \(S'_{\min}\), respectively, are defined by
\[
S_{\max}u=\A u, \qquad u\in D_{\max};
\qquad
S'_{\min}u=\A u, \qquad u\in D_0.
\]
The preminimal operator $S'_{\min}$ is densely defined and closable.
Its closure $(S_{\min},D_{\min})$ is called the minimal operator associated with $\mathcal{A}$.
Moreover, $S_{\min}$ is symmetric and
\[
S_{\min}^*=S_{\max},\quad \text{see \cite[Theorem 9.2.1]{Zettl}.}
\]
The analysis of the singular endpoint $x = 0$ depends on the parameter $\nu$
introduced in (\ref{para}), which determines the corresponding limit-circle or
limit-point classification.
\subsection{Self-adjoint realizations}
In this section we characterize the self-adjoint realizations of the
operator $\mathcal A$ at the singular endpoint $x=0$ by means of
singular Sturm--Liouville theory. We distinguish the analysis according to the value of $\nu$.
For $0 \le \nu < 1$ the singular endpoint $x = 0$ is limit-circle,
whereas for $\nu \ge 1$ it is limit-point. In the critical case $\nu= 0$, logarithmic singularities naturally arise
through the second linearly independent solution.\\

 \textbf{Subcritical case} $0<\nu<1$.\\
 In this case the functions
 \begin{equation}\label{prin}
\phi_+(x):=x^{\sigma+\sqrt{\Delta}/2},\qquad \phi_-(x):=\frac{1}{\sqrt{\Delta}}x^{\sigma-\sqrt{\Delta}/2},\quad x\in(0,1),
\end{equation}
are in $L^2_\beta(0,1)$, and satisfy $\A \varphi_{\pm}=0$. Indeed, $
\mathcal A(x^r)=0$ if and only if $
r_\pm
=
\sigma\pm\sqrt{\Delta}/2.$
 This implies that the endpoint $0$ is limit-circle (LC), see \cite[Definition 7.3.1, and Theorem 7.2.2]{Zettl}. Furthermore, the endpoint $0$ is nonoscillatory (NO), see \cite[Definition 7.3.1, and Theorem 7.3.1]{Zettl}. Then \cite[Theorem 10.5.1]{Zettl} implies that $S_{\min}$ is bounded below,  so we can consider the Friedrichs extension $S_F$ of $S_{\min}$, see Definition \ref{fried}.\\

From \cite[Definition 6.2.1]{Zettl} we have that $\phi_+$ is a principal solution at $0$, and $\phi_-$ is a non-principal solution at $0$. From \cite[Theorem 10.5.3, and Remark 10.5.1]{Zettl} we have
\[D(S_F)=\{u\in D_{\max}: [u,\phi_+](0)=u(1)=0\}.\]

On the other hand, we obtain that
    \begin{equation}\label{bounori}
    \left[u, \phi_{+}\right](0)=\lim_{x\rightarrow 0^+}[u, \phi_+](x)=\lim_{x\rightarrow 0^+}\lvs{\frac{u}{\phi_-}(x)[\phi_-,\phi_+](x)+[u,\phi_-](x)\frac{\phi_+}{\phi_-}(x)}=\lim_{x\rightarrow 0^+}\frac{u}{\phi_-}(x),
    \end{equation}
   because  $[\phi_-,\phi_+](0)=1$, $[u,\phi_-](0)$ is finite, see \cite[Lemma 10.2.3]{Zettl}, and $\lim_{x\rightarrow 0^+}\phi_+/\phi_-(x)=0$.\\
   
   The previous argument also yields a representation of the boundary form
in terms of weighted derivatives. More precisely,
   \begin{equation}\label{neumann}
    \left[u, \phi_{+}\right](0)=\lim_{x\rightarrow 0^+}\lvs{\frac{u'}{\phi'_-}(x)[\phi_-,\phi_+](x)+[u,\phi_-](x)\frac{\phi'_+}{\phi'_-}(x)}=\lim_{x\rightarrow 0^+}\frac{u'}{\phi'_-}(x).
    \end{equation}
   
   \textbf{Critical case} $\nu=0$.\\
   In this case the functions
   \begin{equation}\label{critical}
   y_{+}(x)=x^{\sigma}, \quad y_{-}(x)=-x^{\sigma}\ln x
   \end{equation}
   are in $L^2_\beta(0,1)$, and satisfy $\A y_{\pm}=0$. As in the last case, we can see that the endpoint $0$ is LCNO. Furthermore, $y_+$ is a principal solution at $0$, and $y_-$ is a non-principal solution at $0$. Therefore, the corresponding Friedrichs extension has the following domain
   \[D(S_F)=\{u\in D_{\max}: [u,y_+](0)=u(1)=0\}.\]
Since $[y_-,y_+](0)=1$, and proceeding as in (\ref{bounori}) with $y_{\pm}$ instead of $\phi_{\pm}$, we can see that
\begin{equation}\label{bouncri}
    \left[u, y_{+}\right](0)=\lim_{x\rightarrow 0^+}[u, y_+](x)=\lim_{x\rightarrow 0^+}\frac{u}{y_-}(x).
    \end{equation}
    
       The limit-point case, $\nu\ge 1$.
    In this case the function $\phi_-$ given in (\ref{prin}) is not in $L^2_{\beta}(0,1)$, then $x=0$ is limit-point (LP). Theorem 10.4.4 in \cite{Zettl} with $A_1=1, A_2=0$ implies that $D(\A)=\{u\in D_{\max}\,| u(1)=0\}$ is a self-adjoint domain.
		
		\begin{remark}
Assume that \(0<\nu<1\) and \(
\sigma-\sqrt{\Delta}/2\neq0
\)
(equivalently, \(\mu\neq0\)). Since
$
\phi_-'(x)
=
\Delta^{-1/2}(\sigma-\sqrt\Delta/2)
x^{\sigma-\sqrt\Delta/2-1},
$
the identity \eqref{neumann} yields
\[
[u,\phi_+](0)=0
\quad\text{if and only if}\quad
\lim_{x\to0^+}
x^{1-\sigma+\sqrt\Delta/2}u'(x)=0.
\]
Therefore, in this case the boundary
form may equivalently be expressed through a weighted Neumann trace. This equivalent representation is used only to interpret the boundary form;
the boundary control operator introduced later is chosen according to the
non-principal coefficient.
\end{remark}
		
\subsection{Spectral decomposition}
In this subsection we derive the spectral decomposition associated with
the self-adjoint realization of \(\A\) introduced above. The explicit
representation of the eigenfunctions in terms of Bessel functions allows
us to characterize the spectrum of \(\A\) and to obtain an orthonormal basis
of \(L^2_\beta(0,1)\).\\

The following proposition summarizes the results obtained in the previous subsections. In particular, we use (\ref{bounori}) and (\ref{bouncri}).
   \begin{proposition}\label{bases}
	Let $0\leq \al<2$, $\beta,\mu\in\R$, with $\Delta\ge 0$, and $\sigma, \nu$ defined in (\ref{para}). We set
	\[D(\mathcal{A}):=\left\{\begin{aligned}
	\{u\in D_{\max}\,|\lim_{x \to 0}\frac{u(x)}{x^{\sigma}\ln(1/x)}= 0, u(1)=0\} & &\text{if  } \nu=0,\\
	\{u\in D_{\max}\,| \lim_{x\rightarrow 0^+}x^{\sqrt{\Delta}/2-\sigma}u(x)=0,u(1)=0\} & &\text{if  } 0<\nu<1,\\
		\{u\in D_{\max}\,| u(1)=0\}& & \text{if  } \nu\ge1.\end{aligned}\right.
\]
	In the limit-circle case $0\le \nu<1$, the above realization
coincides with the Friedrichs extension associated with the singular
endpoint $x=0$. In the limit-point case $\nu\ge1$, the self-adjoint
realization is uniquely determined by the condition $u(1)=0$.\\
Then $\mathcal{A}:D(\mathcal{A})\subset L^2_{\beta}(0,1)\rightarrow L^2_{\beta}(0,1)$ is a self-adjoint operator for each $\nu=\nu(\alpha,\beta,\mu)$. Furthermore, for $\nu$ fixed, the family 
	\begin{equation}\label{Phik}
		\Phi_k(x)=\Phi_{k,\nu}(x):=\frac{(2\kappa_\alpha)^{1/2}}{|J'_\nu(j_{\nu,k})|}x^{\sigma}J_\nu(j_{\nu,k}x^{\kappa_\alpha}),\quad k\geq 1,
	\end{equation}
	is an orthonormal basis for $L^2_\beta(0,1)$ such that $\{\Phi_k\}_{k\geq 1}\subset D(\mathcal{A})$, and 
	\begin{equation}\label{lambdak}
		\mathcal{A}\Phi_k=\lambda_k \Phi_k, \quad \lambda_k=\kappa_\alpha^2 (j_{\nu,k})^2,\quad k\geq 1.
	\end{equation}
	\end{proposition}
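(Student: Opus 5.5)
The plan is to reduce everything to the classical Bessel equation through the Liouville-type substitution $u(x)=x^{\sigma}v(s)$, $s=x^{\kappa_\alpha}$. Self-adjointness will be read off from the Sturm--Liouville analysis of the previous subsection.

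\textbf{Step 1: self-adjointness.} In the subcritical case, \eqref{bounori} gives
\[
[u,\phi_+](0)=\lim_{x\to0^+}\frac{u}{\phi_-}(x)=\sqrt{\Delta}\,\lim_{x\to0^+}x^{\sqrt\Delta/2-\sigma}u(x).
\]
Hence the Friedrichs domain $\{[u,\phi_+](0)=u(1)=0\}$ is exactly the set written in the statement for $0<\nu<1$. In the critical case, \eqref{bouncri} gives $[u,y_+](0)=\lim_{x\to0^+} u/(x^\sigma\ln(1/x))$, since $-\ln x=\ln(1/x)$. For $\nu\ge1$, the domain is the one given by \cite[Theorem 10.4.4]{Zettl}. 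In all three cases, self-adjointness of $\mathcal A$ on $D(\mathcal A)$ then follows from the cited theorems of \cite{Zettl}; no new argument is required.

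\textbf{Step 2: the Bessel reduction.} I will insert $u=x^\sigma v(s)$, $s=x^{\kappa_\alpha}$, into $\mathcal A u=\lambda u$. Using $\sigma^2-(1-\alpha-\beta)\sigma\cdot(\text{appropriate terms})-\mu$ and $\kappa_\alpha^2\nu^2=\Delta/4$, a direct computation should yield
\[
s^2v''+sv'+\Bigl(\tfrac{\lambda}{\kappa_\alpha^2}s^2-\nu^2\Bigr)v=0 .
\]
Writing $\lambda=\kappa_\alpha^2\omega^2$, two independent solutions of $\mathcal A u=\lambda u$ are
\[
x^\sigma J_\nu(\omega x^{\kappa_\alpha}),\qquad x^\sigma Y_\nu(\omega x^{\kappa_\alpha}),
\]
the second being $\ln$-type when $\nu=0$. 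Since $J_\nu(z)\sim c z^\nu$ as $z\to0$, the first solution behaves like $x^{\sigma+\sqrt\Delta/2}=\phi_+$ (respectively $y_+$ when $\nu=0$). It therefore satisfies the boundary condition at $0$: the weighted limit in $D(\mathcal A)$ vanishes because $x^{\sqrt\Delta/2-\sigma}\cdot x^{\sigma+\sqrt\Delta/2}=x^{\sqrt\Delta}\to0$, and in the critical case $x^\sigma/(x^\sigma\ln(1/x))\to0$. The $Y_\nu$ component has a nonzero non-principal coefficient in the limit-circle case and fails to lie in $L^2_\beta$ near $0$ in the limit-point case. Consequently every eigenfunction is a multiple of $x^\sigma J_\nu(\omega x^{\kappa_\alpha})$. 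Imposing $u(1)=0$ forces $J_\nu(\omega)=0$, so $\omega=j_{\nu,k}$ and $\lambda_k=\kappa_\alpha^2 j_{\nu,k}^2$. Because $S_F$ is nonnegative here, only positive zeros occur. I also need $\Phi_k\in D_{\max}$: $\Phi_k$ and $\mathcal A\Phi_k=\lambda_k\Phi_k$ lie in $L^2_\beta$, which is checked from the same asymptotics, because $\phi_+$ and $y_+$ are in $L^2_\beta$.

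\textbf{Step 3: normalization and basis property.} Since $2\sigma+\beta=1-\alpha$, the substitution $s=x^{\kappa_\alpha}$ gives $x^{1-\alpha}\,dx=s\,ds/\kappa_\alpha$. Hence
\[
\int_0^1 x^{2\sigma}J_\nu(j_{\nu,k}x^{\kappa_\alpha})J_\nu(j_{\nu,l}x^{\kappa_\alpha})x^\beta\,dx=\frac1{\kappa_\alpha}\int_0^1 sJ_\nu(j_{\nu,k}s)J_\nu(j_{\nu,l}s)\,ds=\delta_{kl}\frac{J_\nu'(j_{\nu,k})^2}{2\kappa_\alpha},
\]
which explains the constant in \eqref{Phik}. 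More structurally, I will show that the map
\[
(Uu)(s)=\kappa_\alpha^{-1/2}s^{-\sigma/\kappa_\alpha}u(s^{1/\kappa_\alpha})
\]
is unitary from $L^2_\beta(0,1)$ onto $L^2((0,1);s\,ds)$ and carries $\Phi_k$ to the normalized Fourier--Bessel functions $\sqrt2\,J_\nu(j_{\nu,k}s)/|J'_\nu(j_{\nu,k})|$. The classical completeness of Fourier--Bessel series for $\nu>-1$ (here $\nu\ge0$) then transfers to completeness of $\{\Phi_k\}$ in $L^2_\beta(0,1)$.

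\textbf{Main obstacle.} I expect the main difficulty to be the careful bookkeeping in Step 2 at the singular endpoint. This means verifying that the $Y_\nu$ (or logarithmic) branch is genuinely excluded in each regime, and that $\Phi_k$ satisfies the weighted condition in the critical case $\nu=0$, where both solutions carry the same power $x^\sigma$. If that is delicate, I would instead use completeness through Step 3 directly: since $\{\Phi_k\}$ is already a complete orthonormal family of eigenvectors of the self-adjoint $\mathcal A$, there is no room for further eigenfunctions or continuous spectrum.
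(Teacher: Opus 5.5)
Your proposal is correct and follows the paper's proof essentially step for step. Both arguments read self-adjointness off the Zettl-based boundary-form identifications. Both use the substitution $u=x^\sigma v(x^{\kappa_\alpha})$ to reach Bessel's equation, excluding the $Y_\nu$ branch by the weighted boundary condition (limit-circle) or by non-integrability (limit-point). Both compute orthogonality via $s=x^{\kappa_\alpha}$ and transfer completeness from the classical Fourier--Bessel basis through a unitary change of variables; your map lands in $L^2((0,1);s\,ds)$ rather than the paper's $L^2(0,1)$ with Hochstadt's $x^{1/2}J_\nu$ basis, which is the same argument.

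The only blemish is the garbled coefficient identity in Step 2: what the computation actually uses is $2\sigma+\alpha+\beta-1=0$ together with $\sigma^2-\mu=\Delta/4=\kappa_\alpha^2\nu^2$.
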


	\begin{proof} The self-adjointness of the stated realization follows from the
previous subsection. It remains to prove the spectral assertion.\\
	Assume that $u$ is smooth on $(0,1)$ and set $u(x)=x^\sigma v\!\left(x^{\kappa_\alpha}\right)$, then  
\[
\A u=\lambda u \quad \text{iff} \quad z^2 v''(z)+ z v'(z)+\left(\frac{\lambda}{\kappa_\alpha^2}z^2-\nu^2\right)v(z)=0.
\]
Therefore, for $\lambda>0$, the general solution of $\A u=\lambda u$ is
\[
u(x)=x^\sigma\Bigl(
c_1 J_\nu\!\bigl(\tfrac{\sqrt{\lambda}}{\kappa_\alpha}x^{\kappa_\alpha}\bigr)
+
c_2 Y_\nu\!\bigl(\tfrac{\sqrt{\lambda}}{\kappa_\alpha}x^{\kappa_\alpha}\bigr)
\Bigr),
\]
where \(J_\nu\) and \(Y_\nu\) denote the Bessel functions
of the first and second kind, respectively.\\

\emph{The limit-circle case, $0\le \nu <1$}.\\
From (\ref{asinu}) and (\ref{asin0}) we obtain that
\[\lim_{x\rightarrow 0^+}\frac{Y_0\!\bigl(\tfrac{\sqrt{\lambda}}{\kappa_\alpha}x^{\kappa_\alpha}\bigr)}{\ln(1/x)}=-\frac{2\kappa_\alpha}{\pi},\quad\text{and}\quad\lim_{x\rightarrow 0^+}x^{\sqrt{\Delta}/2}Y_\nu\!\bigl(\tfrac{\sqrt{\lambda}}{\kappa_\alpha}x^{\kappa_\alpha}\bigr)=-\frac{\Gamma(\nu)}{\pi}\left(\frac{2\kappa_\alpha}{\sqrt{\lambda}}\right)^\nu\]
for $0<\nu<1$, and the boundary condition at \(x=0\) forces \(c_2=0\). \\

\noindent\emph{The limit-point case, $\nu \geq 1$}.\\
From (\ref{asinu}) and (\ref{asina}) it follows that $x^{\sigma}Y_\nu\!\bigl(\tfrac{\sqrt{\lambda}}{\kappa_\alpha}x^{\kappa_\alpha}\bigr)\notin L^2_\beta(0,1)$, then $c_2=0$.\\

The condition $u(1)=0$ implies that $\sqrt{\lambda}/\kappa_\alpha$ is a zero of the Bessel function $J_\nu$. Correspondingly, the eigenfunctions are
\[
\widetilde{\Phi}_k(x)=x^\sigma J_\nu(j_{\nu,k}x^{\kappa_\alpha}),\qquad k\ge1.
\]
The change of variable $y=x^{\kappa_\alpha}$, and the standard orthogonality formula
\[
\int_{0}^{1} x\, J_{\nu}(j_{\nu,m}x)\, J_{\nu}(j_{\nu,n}x)\, dx
=
\frac{ |J_{\nu+1}(j_{\nu,n})|^{2}}{2}\,\delta_{mn}.
\]
implies that $\widetilde{\Phi}_k$, $k\geq 1$, is an orthogonal family in $L^2_\beta(0,1)$. Moreover,
\[\|\widetilde{\Phi}_k\|_{L^2_\beta(0,1)}^2=\frac1{\kappa_\alpha}\int_0^1 y\,|J_\nu(j_{\nu,k}y)|^2\,dy=\frac1{2\kappa_\alpha}\,|J'_\nu(j_{\nu,k})|^2\]
because $J_\nu(j_{\nu,k})=0$ and $J'_\nu(j_{\nu,k})=-J_{\nu+1}(j_{\nu,k})$, see (\ref{recur}).\\

Therefore, $\Phi_k\in D(\A)$ for all $k\ge 1$, $\nu=\nu(\alpha,\beta,\mu)$.\\

For $\nu>-1$, Hochstadt proved in \cite{Hoch} that the family
	\[\Theta_k(x):=\frac{2^{1/2}}{|J'_\nu(j_{\nu,k})|}x^{1/2}J_\nu(j_{\nu,k}x),\quad k\geq 1,\]
	is an orthonormal basis for $L^2(0,1)$.\\ 
	
	Let $\mathcal{U}$ be the unitary operator $\mathcal{U}:L^2(0,1)\rightarrow L^2_\beta(0,1)$ given by
	\begin{equation*}
		\mathcal{U}u(x):=\kappa_\alpha^{1/2}x^{-\alpha/4-\beta/2}u(x^{\kappa_\alpha}), \quad u\in L^2(0,1).
	\end{equation*}
	Notice that $\mathcal{U}\Theta_k=\Phi_k$, $k\geq 1$, therefore $\Phi_k$, $k\geq 1$, $\nu\ge0$, is an orthonormal basis for $L^2_\beta(0,1)$.
\end{proof}


\section{Fractional spaces and wave dynamics}\label{3}
In this section we introduce the scale of fractional spaces associated
with the singular Sturm--Liouville operator \(\A\), and study the
corresponding abstract wave dynamics.
The spectral decomposition obtained in Section \ref{2} naturally induces a
Hilbert scale adapted to the degenerate/singular structure of the
equation.
Within this framework, the wave equation can be formulated as a
first-order evolution system generated by a skew-adjoint operator on the
energy space.
We also derive the spectral representation of solutions, which will play
a fundamental role in the controllability analysis developed in Section \ref{4}.\\
The abstract setting adopted in this section follows the semigroup
framework for second-order evolution equations developed in
Tucsnak and Weiss \cite{tucwei}. In particular, the fractional scales
associated with the operator \(\A\), as well as the state,
control, and observation spaces introduced later, are formulated within
this operator-theoretic framework.
\subsection{Fractional spaces}
We now introduce a family of interpolation spaces for the initial data. For any $s\geq 0$, we define
\[\mathcal{H}^{s}=\mathcal{H}^{s}(0,1):=\left\{u=\sum_{k=1}^\infty a_{k} \Phi_{k}:\|u\|_{\mathcal{H}^{s}}^{2}=\sum_{k=1}^\infty \lambda_{k}^{s}|a_{k}|^{2} <\infty\right\},\]
and we also consider the corresponding dual spaces
\[\mathcal{H}^{-s}:=\left[\mathcal{H}^{s}(0,1)\right]^{\prime}.\]
It is well known that $\mathcal{H}^{-s}$ is the dual space of $\mathcal{H}^{s}$ with respect to the pivot space $L^2_\beta(0,1)$, i.e
\[\mathcal{H}^s\hookrightarrow \mathcal{H}^0=L^2_{\beta}(0,1)=\left(L^2_{\beta}(0,1)\right)'\hookrightarrow \mathcal{H}^{-s},\quad s>0. \]
For $s>0$, equivalently, $\mathcal{H}^{-s}$ is the completion of $L^2_\beta(0,1)$ with respect to the norm
\[\|u\|^2_{-s}:=\sum_{k=1}^{\infty}\lambda_k^{-s}|\langle u,\Phi_k\rangle_\beta|^2,\quad u\in L^2_{\beta}(0,1).\]

Notice that $D(\A)=\mathcal{H}^{2}$, and we also have
\[\langle \A u,u\rangle_\beta=\sum_{k=1}^\infty\lambda_k |a_k|^2\ge \kappa_\alpha^2(j_{\nu,1})^2\|u\|_{\beta}^2\]
for all $u=\sum_{k=1}^\infty a_{k} \Phi_{k}\in D(\A)$. Therefore, $(\A,D(\A))$ is a strictly positive operator. \\

From now on, we set $H:= L^2_\beta(0,1)$, and
\[H_{1/2}:=D(\mathcal{A}^{1/2}).\]
From \cite[Remark 3.4.4]{tucwei} we have that $H_{1/2}$ is the completion of $D(\mathcal{A})$ with respect to the norm
$$
\|u\|_{1/2}:=\sqrt{\left\langle \mathcal{A} u, u\right\rangle_\beta}$$
for all $u\in D(\mathcal{A})$. From \cite[Proposition 3.4.8]{tucwei} it follows that $H_{1/2}=\mathcal{H}^1$.\\

\begin{remark}\label{baes}Notice that $\lambda_k^{-s/2}\Phi_k$, $k\ge 1$, is an orthonormal basis for $\mathcal{H}^s$, $s\in\R$. In particular, if $u\in \mathcal{H}^s$ then
\begin{equation}\label{expan}
u
=
\sum_{k=1}^{\infty}
\langle u,\Phi_k\rangle_{\mathcal{H}^s}\,
\lambda_k^{-s}\,\Phi_k
\quad\text{and}\quad
\|u\|_{\mathcal{H}^s}^2
=
\sum_{k=1}^{\infty}
\big|\langle u,\Phi_k\rangle_{\mathcal{H}^s}\big|^2\,
\lambda_k^{-s}.
\end{equation}
In particular, $\langle u,\Phi_k\rangle_{\mathcal H^{s+\ell}}=(\lambda_k)^{\ell}\langle u,\Phi_k\rangle_{\mathcal H^{s}}$ for all $u\in\mathcal H_{s+\ell}$, $\ell\ge 1$.
\end{remark}
\subsection{The abstract wave equation}
The spectral decomposition obtained above naturally induces a scale of
fractional spaces adapted to the singular wave equation.\\
Define $\mathcal{X}_{0}:=H_{1/2} \times H=\mathcal{H}^1\times \mathcal{H}^0$, with the scalar product
$$
\left\langle\left(\begin{array}{l}
w_1 \\
v_1
\end{array}\right),\left(\begin{array}{l}
w_2 \\
v_2
\end{array}\right)\right\rangle_{\mathcal{X}_{0}}
=\left\langle w_1, w_2\right\rangle_{\mathcal{H}^1}+\left\langle v_1, v_2\right\rangle_{\mathcal{H}^0} .
$$
Consider the dense subspace ${D}(\mathbf{A}):=D\left(\mathcal{A}\right) \times D(\mathcal{A}^{1/2})$ of $\mathcal{X}_{0}$, see \cite[Proposition 3.4.3]{tucwei}, and the linear operator $\mathbf{A}: D(\mathbf{A}) \rightarrow \mathcal{X}_{0}$ given by
$$
\mathbf{A}=\left[\begin{array}{cc}
0 & I \\
-\mathcal{A} & 0
\end{array}\right], \quad \text { i.e. } \quad \mathbf{A}\left(\begin{array}{l}
\varphi \\
\psi
\end{array}\right)=\left(\begin{array}{c}
\psi \\
-\mathcal{A}\varphi
\end{array}\right) .
$$
Then $\mathbf{A}$ is skew-adjoint on $\mathcal{X}_{0}$ and $0 \in \rho(\mathbf{A})$, see \cite[Proposition 3.7.6]{tucwei}.\\ 

For all $k\in \mathbb{N}$ we define 
$$\gamma_k:=\lambda^{1/2}_k=\kappa_\alpha j_{\nu,k},\quad \Phi_{-k}:=-\Phi_k,\quad \gamma_{-k}:=-\gamma_k.$$
Proposition \ref{bases} and \cite[Proposition 3.7.7]{tucwei} imply that 
 $\mathbf{A}$ is diagonalizable, with the eigenvalues $i \gamma_k$ corresponding to the orthonormal basis of eigenvectors
$$
\phi_k:=\frac{1}{\sqrt{2}}\left(\begin{array}{c}
\frac{1}{i \gamma_k} \Phi_k \\
\Phi_k
\end{array}\right) \quad \text{for all   }k \in \mathbb{Z}^*:=\mathbb{Z}\backslash\{0\} .
$$

From \cite[Proposition 3.8.7]{tucwei} we have that $\mathbf{A}$ generates a unitary group $\mathbb{T}(t)$, $t\in\mathbb{R}$, on $\mathcal{X}_{0}$.\\

For any $s\in\mathbb{R}$, we define the corresponding scale of state spaces
\[
\mathcal{X}_{s} := \mathcal{H}^{s+1} \times \mathcal{H}^{s},
\quad
\|(w_0,w_1)\|_{\mathcal{X}_{s}}^2
:=
\|w_0\|_{\mathcal{H}^{s+1}}^2
+
\|w_1\|_{\mathcal{H}^{s}}^2.
\]

The operator $\mathbf{A}$ is skew-adjoint on $\mathcal{X}_{s}$ with domain $D(\mathbf{A})=\mathcal{X}_{s+1}$,
and therefore generates a unitary $C^0$-group $\mathbb{T}(t)$, $t\in\mathbb{R}$, on $\mathcal{X}_{s}$.

\begin{remark}[Triple Gelfand] \label{gelfand}
The pivot space for the abstract control framework will be chosen as
\[
X:=(\mathcal{X}_{s},\|\cdot\|_{\mathcal{X}_{s}}),\quad\text{with   }s=\nu-1/2.
\]
We write $X_1:=D(\mathbf{A})=\mathcal{X}_{s+1}$ with the norm
\[\|(w_0,w_1)\|_{X_1}^2:=\|\mathbf{A}(w_0,w_1)\|_{X}^2=\|(w_1,-\mathcal Aw_0)\|_{\mathcal X_s}^2=\|(w_0,w_1)\|_{\mathcal{X}_{s+1}}^2\]
for all $(w_0,w_1)\in X_1$, thus $(X_1,\|\cdot\|_{X_1})=(\mathcal{X}_{s+1},\|\cdot\|_{\mathcal{X}_{s+1}})$. Furthermore, \(X_{-1}\) denotes the extrapolation space obtained as the
completion of \(X\) with respect to the norm
$
\|Z\|_{X_{-1}}:=\|\mathbf A^{-1}Z\|_{X}
$. With the above spectral scale, \(X_{-1}\) can be identified with
\(\mathcal X_{s-1}\). Moreover, \(X_{-1}\) is the dual of
\(X_1\) with respect to \(X\) see \cite[Proposition 2.10.2]{tucwei}.
\end{remark}
%
\subsection{Spectral representation of solutions}
By Remark \ref{baes} we have that $\lambda_{|k|}^{-s/2}\phi_k$, $k\in\mathbb{Z}^*$, forms an orthonormal basis for $\mathcal{X}_{s}$. Assume that $(w_0,w_1)\in \mathcal{X}_{s}$ 
then
\[
\binom{w_0}{w_1}
=
\sum_{k\in\mathbb{Z}^*}
\left\langle
\binom{w_0}{w_1},
\lambda_{|k|}^{-s/2}\,\phi_k
\right\rangle_{\mathcal{X}_s}
\lambda_{|k|}^{-s/2}\,\phi_k.
\]
By direct computation and the definition of the eigenvectors $\phi_k$ we have
\begin{eqnarray}\label{group}
\mathbb{T}(t)
\binom{w_0}{w_1}
&=&
\sum_{k\in\mathbb{Z}^*}
\left\langle
\binom{w_0}{w_1},
\lambda_{|k|}^{-s/2}\,\phi_k
\right\rangle_{\mathcal{X}_s}
\lambda_{|k|}^{-s/2}\,\phi_k\,
e^{i\gamma_k t}\\
&=&
\frac12
\sum_{k\in\mathbb{Z}^*}
\left(
\frac{i}{\gamma_k}
\langle w_0,\Phi_k\rangle_{\mathcal{H}^{s+1}}
+
\langle w_1,\Phi_k\rangle_{\mathcal{H}^{s}}
\right)
\lambda_{|k|}^{-s}
\binom{\frac{1}{i\gamma_k}\,\Phi_k}{\Phi_k}
\,e^{i\gamma_k t}. \notag
\end{eqnarray}

\paragraph{Notation.}
Given $(w_0,w_1)\in \mathcal{X}_s$, we denote by
\[
\mathbb {T}(t)(w_0,w_1) = \big(w(\cdot,t),\,w_t(\cdot,t)\big)
\]
the solution of the homogeneous system $\dot W=\mathbf{A}W$, $W(0)=(w_0,w_1)$, $0\le t\le T$.
In what follows, we shall identify $w(t)$ with the first component
$\pi_1\big(\mathbb{T}(t)(w_0,w_1)\big)=w(\cdot,t)$ whenever no confusion arises.\\

Therefore (\ref{group}) implies that 
\begin{equation}\label{repre}
w(x, t)=\sum_{k=1}^\infty  \left[b_k e^{i \gamma_k t}+b_{-k} e^{-i \gamma_k t}\right]\Phi_k(x)
\end{equation}
where
\begin{equation}\label{coef}
\begin{aligned}
b_k &:=
\frac12
\left(
\langle w_0,\Phi_k\rangle_{\mathcal H^{s+1}}
\lambda_k^{-s-1}
-
i\,
\langle w_1,\Phi_k\rangle_{\mathcal H^{s}}
\lambda_k^{-1/2-s}
\right),
\\[1ex]
b_{-k} &:=
\frac12
\left(
\langle w_0,\Phi_k\rangle_{\mathcal H^{s+1}}
\lambda_k^{-s-1}
+
i\,
\langle w_1,\Phi_k\rangle_{\mathcal H^{s}}
\lambda_k^{-1/2-s}
\right).
\end{aligned}
\end{equation}

By applying \cite[Proposition 3.8.7]{tucwei} we obtain a result on the existence and uniqueness of the following homogeneous system,
\begin{equation}\label{homo}
\left\{\begin{aligned}
w_{tt}-(x^\alpha w_x)_x-\beta x^{\alpha -1}w_x-\frac{\mu}{x^{2-\alpha}} w&=0, & & \text { if } (x,t)\in Q:= (0, 1) \times (0, T ), \\
w(x, 0) =w_{0}(x), \quad w_t(x, 0) &=w_{1}(x)& & \text { if } x\in (0, 1).
\end{aligned}\right.
\end{equation}
The boundary conditions are those encoded in the self-adjoint realization
of \(\mathcal A\). 
\begin{proposition} \label{unique} Let $T>0$, $0\leq \alpha < 2$, $\beta, \mu, s\in \mathbb{R}$ with $\Delta \ge 0$.
 For each $\left(w_0, w_1\right) \in \mathcal{X}_{s}$ there exists a unique mild solution $w \in C^0([0, T] ; \mathcal{H}^{s+1}) \cap C^1([0, T] ;\mathcal{H}^{s})$ of the system (\ref{homo}) that satisfies
$$
\left\|w(t)\|_{\mathcal{H}^{s+1}}^2+\right\| w_t (t)\|^2_{\mathcal{H}^{s}} = \left\|w_0\|_{\mathcal{H}^{s+1}}^2+\right\| w_1\|^2_{\mathcal{H}^{s}},\quad\text{for all }0\leq t\leq T.$$

Moreover, if $\left(w_0, w_1\right) \in \mathcal{X}_{s+1}$, there exists a unique solution $w \in C^0([0, T];\mathcal{H}^{s+2}) \cap C^1([0, T] ; \mathcal{H}^{s+1})\cap  C^2([0,T];\mathcal{H}^s)$ of the system (\ref{homo}).
\end{proposition}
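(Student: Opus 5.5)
The proof is a direct application of the abstract semigroup theory recalled above, applied on the spectral scale $\mathcal X_s$. We proceed in four steps.

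\emph{Step 1: well-posedness on $\mathcal X_s$.} By Remark \ref{baes}, the family $\lambda_{|k|}^{-s/2}\phi_k$, $k\in\mathbb Z^*$, is an orthonormal basis of $\mathcal X_s=\mathcal H^{s+1}\times\mathcal H^s$. In this basis $\mathbf A$ acts diagonally with purely imaginary eigenvalues $i\gamma_k$. Hence $\mathbf A$ with domain $\mathcal X_{s+1}$ is skew-adjoint on $\mathcal X_s$, which is the diagonal-operator criterion \cite[Proposition 3.7.7]{tucwei} transported to the scale. Stone's theorem, in the form \cite[Proposition 3.8.7]{tucwei}, then gives a unitary $C^0$-group $\mathbb T(t)$ on $\mathcal X_s$, given explicitly by (\ref{group}).

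\emph{Step 2: the mild solution and its regularity.} For $(w_0,w_1)\in\mathcal X_s$ we define the mild solution by $(w(t),w_t(t)):=\mathbb T(t)(w_0,w_1)$. Strong continuity of the group gives $w\in C^0([0,T];\mathcal H^{s+1})$. For the time derivative we need more than continuity of the second component. We check directly from (\ref{repre}) that the distributional time derivative of the first component, computed in $\mathcal H^{s}$, coincides with the second component. Termwise, the derivative of $b_ke^{i\gamma_kt}\Phi_k$ is $i\gamma_kb_ke^{i\gamma_kt}\Phi_k$. The resulting series converges in $C^0([0,T];\mathcal H^s)$, since $\gamma_k^2\lambda_k^{s}|b_k|^2\simeq\lambda_k^{s+1}|b_k|^2$ is summable. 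This yields $w\in C^1([0,T];\mathcal H^s)$ with $w_t=\pi_2\mathbb T(t)(w_0,w_1)$.

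\emph{Step 3: energy identity and uniqueness.} Unitarity of $\mathbb T(t)$ is exactly the conservation identity in the statement. Equivalently, it can be read off (\ref{group}) via Parseval, since $|e^{i\gamma_kt}|=1$. Uniqueness of mild solutions is standard: any mild solution in the stated class satisfies the variation-of-constants formula with zero forcing. It therefore equals $\mathbb T(t)(w_0,w_1)$.

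\emph{Step 4: classical solutions for more regular data.} For $(w_0,w_1)\in\mathcal X_{s+1}=D(\mathbf A)$, semigroup theory gives $W\in C^1([0,T];\mathcal X_s)\cap C^0([0,T];\mathcal X_{s+1})$ and $\dot W=\mathbf AW$. Reading off components:
\begin{itemize}
\item the first component gives $w\in C^0([0,T];\mathcal H^{s+2})$;
\item the second component gives $w_t\in C^0([0,T];\mathcal H^{s+1})$;
\item the relation $\dot W=\mathbf AW$ gives $w_{tt}=-\mathcal Aw\in C^0([0,T];\mathcal H^{s})$.
\end{itemize}
Here $\mathcal A$ is the isometry $\mathcal H^{s+2}\to\mathcal H^s$ defined diagonally. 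Consequently $w\in C^2([0,T];\mathcal H^s)$, and the equation in (\ref{homo}) holds in $\mathcal H^s$, with the boundary conditions encoded in $D(\mathcal A)$.

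The main obstacle is mild. One must justify that, for negative or noninteger $s$, $\mathbf A$ on $\mathcal X_s$ is the correct extension or restriction of the $\mathcal X_0$ operator, and that the components of the semigroup regularity translate into the claimed $C^k$ statements. I would handle this entirely through the explicit diagonal representation, treating every space as a weighted $\ell^2$ space. With that identification, all four steps reduce to routine estimates on sequence spaces.
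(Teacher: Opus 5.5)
Your proposal is correct and follows the paper's route: the paper obtains the proposition directly from the abstract second-order result \cite[Proposition 3.8.7]{tucwei}, applied to the skew-adjoint operator $\mathbf A$ on $\mathcal X_s$ with domain $\mathcal X_{s+1}$, which generates the unitary group $\mathbb T(t)$. Your diagonal, weighted-$\ell^2$ verification of skew-adjointness, of $w_t=\pi_2\mathbb T(t)(w_0,w_1)$, and of the $C^2$ regularity simply makes explicit the transport of that abstract result to the shifted scale $\mathcal H^{s+2}\subset\mathcal H^{s+1}\subset\mathcal H^{s}$, which the paper asserts without detail.
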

\section{Exact controllability}\label{4}
In this section we formulate the boundary control problem associated with
system \eqref{problem} within the abstract semigroup framework introduced in the
previous sections. The singular structure of the endpoint $x=0$ naturally
determines the observation operator through the corresponding boundary traces
and the Lagrange bracket associated with the Sturm--Liouville expression.\\
We first introduce the observation and control operators on the fractional
energy space $
X=\mathcal H^{\nu+1/2}\times \mathcal H^{\nu-1/2},
$ and establish the admissibility of the observation operator for the unitary
group generated by the wave operator. Using the spectral representation of solutions and Ingham-type inequalities,
we establish admissibility, exact observability, and exact boundary
controllability within this abstract framework.
\subsection{Observation and control operators}
From now on we consider the Gelfand triple $(X_1,X,X_{-1})$ given in Remark \ref{gelfand} with $s=\nu-1/2$.\\
The definition of the observation operator below is motivated by the
singular Lagrange boundary form associated with the Sturm--Liouville
expression. A detailed derivation of the corresponding boundary traces
and their relation with the transposition formulation is given later in
Section \ref{trans}.\\

For $0\le \alpha<2$, $\beta,\mu\in\mathbb R$ we define the abstract observation operator $
\mathbb B^*_\nu=\mathbb B_{\nu(\alpha,\beta,\mu)}^*:X_1\to\mathbb C
$ by
\begin{equation}\label{obsop}
\mathbb B_\nu^*
\binom{w_0}{w_1}
:=
\begin{cases}
-[w_0,y_-](0)
=
\mathcal O_{-\sigma}(w_0),
& \nu=0,
\\[2mm]
-\sqrt\Delta\,[w_0,\phi_-](0)
=
\sqrt\Delta\,
\mathcal O_{-\sigma-\sqrt\Delta/2}(w_0),
& \nu>0.
\end{cases}
\end{equation}
where $ \phi_-,y_-$ are given in (\ref{prin}) and (\ref{critical}), and the boundary trace functionals $\mathcal O_\delta$ are defined by
\[
\mathcal O_\delta(u)
:=
\lim_{x\to0^+}x^\delta u(x),
\]
whenever the corresponding limit exists.
\begin{proposition}\label{admi} Let $0\le \alpha<2$, $\beta,\mu\in \mathbb R$ with $\Delta\ge 0$. Then $\mathbb{B}^*_\nu\in \mathcal L (X_1,\mathbb C)$.
\end{proposition}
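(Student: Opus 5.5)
The plan is to reduce $\mathbb B^*_\nu$ to a weighted point-trace of $w_0$ at $x=0$ and to compute this trace termwise in the eigenbasis of Proposition \ref{bases}. Note that $\mathbb B^*_\nu(w_0,w_1)$ only depends on $w_0\in\mathcal H^{s+2}=\mathcal H^{\nu+3/2}$. Expanding $w_0=\sum_k a_k\Phi_k$, the goal is
\[
|\mathbb B^*_\nu(w_0,w_1)|\le C\Big(\sum_k\lambda_k^{\nu+3/2}|a_k|^2\Big)^{1/2}\le C\|(w_0,w_1)\|_{X_1}.
\]
I will work with the trace form $\mathcal O_{-\sigma-\sqrt\Delta/2}$ (respectively $\mathcal O_{-\sigma}$ when $\nu=0$) given in \eqref{obsop}. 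Since $\kappa_\alpha\nu=\sqrt\Delta/2$, this weight exactly cancels the factor $x^{\sigma}(x^{\kappa_\alpha})^{\nu}$ coming from $\Phi_k$.

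\emph{Step 1 (traces of eigenfunctions).} Using the small-argument behaviour $J_\nu(z)\sim (z/2)^\nu/\Gamma(\nu+1)$, I obtain for every $\nu\ge0$
\[
c_k:=\lim_{x\to0^+}x^{-\sigma-\sqrt\Delta/2}\Phi_k(x)=\frac{(2\kappa_\alpha)^{1/2}}{|J'_\nu(j_{\nu,k})|}\,\frac{(j_{\nu,k}/2)^\nu}{\Gamma(\nu+1)}.
\]
When $\nu=0$, this is the same as $\lim_{x\to0^+}x^{-\sigma}\Phi_k(x)$.

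\emph{Step 2 (growth of the traces).} The standard asymptotics $|J'_\nu(j_{\nu,k})|=|J_{\nu+1}(j_{\nu,k})|\sim\sqrt{2/(\pi j_{\nu,k})}$ hold. Together with $j_{\nu,k}\sim \pi k$, they give $|c_k|\le C j_{\nu,k}^{\nu+1/2}\le C\lambda_k^{\nu/2+1/4}$, so that $|c_k|\le Ck^{\nu+1/2}$. For this I only need that $|J'_\nu(j_{\nu,k})|$ is bounded below by $c\,j_{\nu,k}^{-1/2}$ for all $k\ge1$, which follows from the asymptotics for large $k$ and from positivity for the finitely many remaining $k$.

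\emph{Step 3 (interchanging limit and sum) — the main obstacle.} I must justify
\[
\lim_{x\to0^+}x^{-\sigma-\sqrt\Delta/2}w_0(x)=\sum_k a_kc_k.
\]
I will use the classical bound $|J_\nu(z)|\le (z/2)^\nu/\Gamma(\nu+1)$, valid for real $z\ge0$ and $\nu\ge-1/2$. It yields, uniformly in $x\in(0,1)$,
\[
|x^{-\sigma-\sqrt\Delta/2}\Phi_k(x)|\le |c_k|.
\]
By Cauchy--Schwarz,
\[
\sum_k|a_k||c_k|\le\Big(\sum_k\lambda_k^{\nu+3/2}|a_k|^2\Big)^{1/2}\Big(\sum_k\lambda_k^{-\nu-3/2}|c_k|^2\Big)^{1/2}.
\]
The second factor is bounded by $C\big(\sum_k k^{-2\nu-3}k^{2\nu+1}\big)^{1/2}=C\big(\sum_k k^{-2}\big)^{1/2}<\infty$. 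Hence the series $\sum_k a_k\,x^{-\sigma-\sqrt\Delta/2}\Phi_k(x)$ converges uniformly on $(0,1)$ by the Weierstrass M-test, and each term has a limit at $0$. Therefore the limit defining $\mathcal O$ exists and equals $\sum_k a_kc_k$. Pointwise equality of $w_0$ with its series is justified because convergence in $\mathcal H^{\nu+3/2}\subset L^2_\beta$ together with locally uniform convergence gives the same function a.e., and both are continuous. The same Cauchy--Schwarz estimate then gives the bound stated at the beginning.

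\emph{Step 4 (identification with the bracket).} Finally, I check that $-[w_0,y_-](0)$, respectively $-\sqrt\Delta[w_0,\phi_-](0)$, coincides with the above trace. I would do this first on finite combinations of the $\Phi_k$, by direct computation from \eqref{critical} and \eqref{prin} as in \eqref{bounori}, and then extend by the density of such combinations in $\mathcal H^{\nu+3/2}$. This extension is exactly how the operator is defined on all of $X_1$. Linearity then yields $\mathbb B^*_\nu\in\mathcal L(X_1,\mathbb C)$.
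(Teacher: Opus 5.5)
Your proposal is correct and follows the paper's route: expand $w_0$ in the eigenbasis, compute the eigenfunction traces as in Lemma \ref{boundtrac}, bound $|J_\nu'(j_{\nu,k})|^{-1}\le C j_{\nu,k}^{1/2}$ via Lemma \ref{reduce}, and apply Cauchy--Schwarz against $\sum_k\lambda_k^{-1}<\infty$. Your Step 3 (the uniform bound $|z^{-\nu}J_\nu(z)|\le 2^{-\nu}/\Gamma(\nu+1)$ together with the M-test) justifies passing the limit $x\to0^+$ through the series, a step the paper uses only implicitly.
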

\begin{proof} For any $\binom{w_0}{w_1}\in X_1$ we can write
\begin{equation}\label{expadmi}
w_0
=
\sum_{k=1}^{\infty}
\langle w_0,\Phi_k\rangle_{\mathcal{H}^{\nu+3/2}}\,
\lambda_k^{-\nu-3/2}\,\Phi_k,\quad \text{see (\ref{expan})}.
\end{equation}

\textit{Case} $\nu=0$.  By Lemmas \ref{reduce}-\ref{boundtrac},  (\ref{lambdak}), (\ref{expan}), (\ref{obsop}-\ref{expadmi}), and (\ref{below}) we have that there exists a constant $C=C_{\alpha,\beta,\mu}>0$ such that
\[
\begin{aligned}
\left|
\mathbb B_0^*
\begin{pmatrix}
w_0\\ w_1
\end{pmatrix}
\right|
&\le
C
\sum_{k=1}^\infty
|\langle w_0,\Phi_k\rangle_{\mathcal H^{3/2}}|
\lambda_k^{-5/4}\\
&\le
C
\left(
\sum_{k=1}^\infty
|\langle w_0,\Phi_k\rangle_{\mathcal H^{3/2}}|^2
\lambda_k^{-3/2}
\right)^{1/2}
\left(
\sum_{k=1}^\infty
\lambda_k^{-1}
\right)^{1/2}\\
&\le
C
\|w_0\|_{\mathcal H^{3/2}}
\le
C
\left\|(w_0,w_1)
\right\|_{ X_1}.
\end{aligned}
\]


\textit{Case} $\nu>0$. By Lemmas \ref{reduce}-\ref{boundtrac},  (\ref{lambdak}), (\ref{expan}), (\ref{obsop}-\ref{expadmi}), and (\ref{below}) we have that there exists a constant $C=C_{\alpha,\beta,\mu}>0$ such that
\[\left|\mathbb{B}_\nu^*\binom{w_0}{w_1} \right|\leq
 \frac{C_{\alpha,\beta,\mu}(2\kappa_\alpha)^{1/2}}{2^{\nu}\,\Gamma(\nu+1)}\sum_{k=1}^{\infty} \frac{\left(j_{\nu, k}\right)^\nu}{\left|J_\nu^{\prime}\left(j_{\nu, k}\right)\right|}|\langle w_0,\Phi_k\rangle_{\mathcal{H}^{\nu+3/2}}|\,
\lambda_k^{-\nu-3/2}\leq C_{\alpha,\beta,\mu} \|(w_0,w_1)\|_{X_1}.
\]
Combining both cases, we conclude that
$
\mathbb B_\nu^*\in\mathcal L(X_1,\mathbb C).
$
\end{proof}

The corresponding control operator $\mathbb{B}_\nu \in \mathcal L(\mathbb C, X_{-1})$
is defined by transposition as
\begin{equation}\label{conope}
\langle \mathbb{B}_\nu f,(w_0,w_1)\rangle_{X_{-1},X_1}= f \, \mathbb{B}^*_\nu\binom{w_0}{w_1},\quad f\in \mathbb C, \binom{w_0}{w_1}\in X_1.
\end{equation}

\begin{lemma}[Admissibility]\label{admiest}
Let $T>0$, $0\le \alpha<2$, $\beta,\mu\in\mathbb R$ with $\Delta\ge0$.
Then there exists $C_T>0$ such that
\[
\int_0^T
\left|
\mathbb B_\nu^*\mathbb T(t)(w_0,w_1)
\right|^2\,dt
\le
C_T\|(w_0,w_1)\|_X^2,
\qquad \forall
(w_0,w_1)\in X_1.
\]
\end{lemma}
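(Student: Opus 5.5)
Since both the observation operator and the group are diagonal in the basis $\{\phi_k\}$, the plan is to write $\mathbb B_\nu^*\mathbb T(t)(w_0,w_1)$ as a nonharmonic Fourier series in $t$ and then bound it with the direct (upper) Ingham inequality.

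\textbf{Step 1: spectral representation of the observation.} Take $(w_0,w_1)\in X_1$ and use the representation \eqref{repre}--\eqref{coef} of $w(t)=\pi_1\mathbb T(t)(w_0,w_1)$. Proposition \ref{admi} shows that $\mathbb B_\nu^*$ is continuous on $X_1$, and the series \eqref{repre} converges in $\mathcal H^{\nu+3/2}$. Hence $\mathbb B_\nu^*$ may be applied term by term, which gives
\[
\mathbb B_\nu^*\mathbb T(t)(w_0,w_1)=\sum_{k\ge1}\beta_k\left(b_ke^{i\gamma_kt}+b_{-k}e^{-i\gamma_kt}\right),
\qquad \beta_k:=\mathbb B_\nu^*\binom{\Phi_k}{0}.
\]
The trace $\beta_k$ can be computed from the small-argument asymptotics of $J_\nu$, exactly as in the proof of Proposition \ref{admi} and Lemmas \ref{reduce}--\ref{boundtrac}. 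For $\nu>0$, $\mathcal O_{-\sigma-\sqrt\Delta/2}(\Phi_k)$ behaves like $(2\kappa_\alpha)^{1/2}j_{\nu,k}^\nu/(2^\nu\Gamma(\nu+1)|J_\nu'(j_{\nu,k})|)$. For $\nu=0$ the trace is a constant multiple of $1/|J_0'(j_{0,k})|$. Combining this with $|J_\nu'(j_{\nu,k})|\sim (2/(\pi j_{\nu,k}))^{1/2}$, i.e. the bound (\ref{below}), gives
\[
|\beta_k|\le C\,j_{\nu,k}^{\nu+1/2}\le C\,\lambda_k^{\nu/2+1/4}.
\]

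\textbf{Step 2: Ingham upper bound.} The frequencies $\pm\gamma_k=\pm\kappa_\alpha j_{\nu,k}$ are uniformly separated. For large $k$ we have $j_{\nu,k+1}-j_{\nu,k}\to\pi$, and finitely many gaps together with the gap $2\gamma_1$ across $0$ are positive. So the set has a positive gap $\gamma>0$. The direct half of Ingham's inequality holds on every interval $[0,T]$, with no restriction on $T$; for short intervals one covers $[0,T]$ by translates of a fixed interval of length $>2\pi/\gamma$. It yields
\[
\int_0^T\Big|\sum_{k\in\mathbb Z^*}c_ke^{i\gamma_kt}\Big|^2dt\le C_T\sum_{k\in\mathbb Z^*}|c_k|^2,\qquad c_{\pm k}=\beta_kb_{\pm k}.
\]

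\textbf{Step 3: matching with the $X$-norm.} With $s=\nu-1/2$, \eqref{coef} and Remark \ref{baes} give
\[
|b_{\pm k}|^2\le \tfrac12\left(|\langle w_0,\Phi_k\rangle_{\mathcal H^{s+1}}|^2\lambda_k^{-2s-2}+|\langle w_1,\Phi_k\rangle_{\mathcal H^{s}}|^2\lambda_k^{-2s-1}\right).
\]
Multiplying by $|\beta_k|^2\le C\lambda_k^{\nu+1/2}=C\lambda_k^{s+1}$ leaves
\[
C\left(|\langle w_0,\Phi_k\rangle_{\mathcal H^{s+1}}|^2\lambda_k^{-(s+1)}+|\langle w_1,\Phi_k\rangle_{\mathcal H^{s}}|^2\lambda_k^{-s}\right).
\]
Summing over $k$ and using \eqref{expan} gives $C\|(w_0,w_1)\|_X^2$, which is the claimed estimate.

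\textbf{Main obstacle.} The delicate point is the exact growth rate of the traces, $|\beta_k|\lesssim\lambda_k^{(2\nu+1)/4}$. This requires the uniform Bessel estimates for $x^{-\sigma-\sqrt\Delta/2}\Phi_k$ near $0$ (respectively for $\Phi_k/(x^\sigma\log(1/x))$ when $\nu=0$) together with the lower bound (\ref{below}) on $|J_\nu'(j_{\nu,k})|$. The trace must grow exactly like $\lambda_k^{(s+1)/2}$; this is precisely why $X$ uses the shift $s=\nu-1/2$. A secondary point is justifying the term-by-term application of $\mathbb B_\nu^*$. This follows from continuity on $X_1$ and density, and the estimate then extends by density if needed.
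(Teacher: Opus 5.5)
Your proposal is correct and follows essentially the paper's own argument. You expand $\mathbb B_\nu^*\mathbb T(t)(w_0,w_1)$ as $\sum_{k\ge1}\beta_k(b_ke^{i\gamma_kt}+b_{-k}e^{-i\gamma_kt})$ and bound $|\beta_k|^2\lesssim j_{\nu,k}^{2\nu}/|J_\nu'(j_{\nu,k})|^2\lesssim\lambda_k^{\nu+1/2}$ via Lemmas \ref{boundtrac} and \ref{reduce}. You then apply the direct Ingham inequality and absorb the weight into the $X$-norm. The paper writes the coefficients with $s=\nu+1/2$ and shifts via Remark \ref{baes}, which is the same computation as yours with $s=\nu-1/2$.

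There are two harmless slips. The asymptotic $|J_\nu'(j_{\nu,k})|\sim(2/(\pi j_{\nu,k}))^{1/2}$ is Lemma \ref{reduce}, not (\ref{below}), which only bounds $j_{\nu,k}$ from below. Also, for $\nu=0$ the observed trace is $\mathcal O_{-\sigma}(\Phi_k)$, as in your Step 1, not $\Phi_k/(x^\sigma\log(1/x))$, which tends to $0$ for every $\Phi_k\in D(\mathcal A)$.
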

\begin{proof}
Let $(w_0,w_1)\in X_1$, then Proposition \ref{unique} implies that $\mathbb T(t)\binom{w_0}{w_1}=(w(t),w_t(t))\in X_1$ for $t\in [0,T]$. We use Lemma \ref{boundtrac}, the representation formula (\ref{repre}) with $s=\nu+1/2$, and the upper estimate in Ingham's inequality for uniformly separated
frequencies. Our sequence $(\gamma_k)_{k\in \mathbb{Z}}$ satisfies $\delta:=\inf_{k\neq 0}(\gamma_{k+1}-\gamma_k)>0$. \\
Assume that $\nu>0$, arguing as in the proof of Proposition \ref{admi}, and by Remark \ref{baes}, we obtain
 \begin{eqnarray*}
\int_0^T \left|\mathbb{B}^*_\nu\mathbb T(t)\binom{w_0}{w_1}\right|^2\,\mathrm dt&\le&C_T\sum_{k=1}^\infty \frac{\left(j_{\nu, k}\right)^{2 \nu}}{\left[J_\nu^{\prime}\left(j_{\nu, k}\right)\right]^2}\left(\big|\langle w_0,\Phi_k\rangle_{\mathcal H^{\nu+3/2}}\big|^{2}\,\lambda_k^{-2\nu-3}
\;+\;
\big|\langle w_1,\Phi_k\rangle_{\mathcal H^{\nu+1/2}}\big|^{2}\,\lambda_k^{-2\nu-2}
\right)\\
										&=& C_T\sum_{k=1}^\infty\frac{\left(j_{\nu, k}\right)^{2 \nu}}{\left[J_\nu^{\prime}\left(j_{\nu, k}\right)\right]^2}\left(\big|\langle w_0,\Phi_k\rangle_{\mathcal H^{\nu+1/2}}\big|^{2}\,\lambda_k^{-2\nu-1}
\;+\;
\big|\langle w_1,\Phi_k\rangle_{\mathcal H^{\nu-1/2}}\big|^{2}\,\lambda_k^{-2\nu}
\right)\\
											     &\le& C_T\sum_{k=1}^\infty \left(\big|\langle w_0,\Phi_k\rangle_{\mathcal H^{\nu+1/2}}\big|^{2}\lambda_k^{-\nu-1/2}+\big|\langle w_1,\Phi_k\rangle_{\mathcal H^{\nu-1/2}}\big|^{2}\lambda_k^{-\nu+1/2}\right)=C_T\,\|(w_0,w_1)\|_X^2.
\end{eqnarray*}
The case \(\nu=0\) follows in the same way by using the corresponding
trace formula in Lemma \ref{boundtrac}.
\end{proof}
\begin{remark}\label{admiobco}
\cite[Definition 4.3.1 and (4.3.3)]{tucwei} imply that $\mathbb B_\nu^* $ is an admissible observation operator for $\mathbb{T}(t)$, $t\in \R$.
\end{remark}
\begin{proposition}[Exact observability]\label{observa}
Let $0\le \alpha<2$, $\beta,\mu\in\mathbb R$ with $\Delta\ge0$.
If $T>4/(2-\alpha)$, then there exists $c_T>0$ such that
\[
c_T\|(w_0,w_1)\|_X^2
\le
\int_0^T
\left|
\mathbb B_\nu^*\mathbb T(t)(w_0,w_1)
\right|^2\,dt,
\qquad \forall
(w_0,w_1)\in X_1.
\]
\end{proposition}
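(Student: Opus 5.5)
The proof goes through the spectral representation \eqref{repre}: we compute the observation of each mode explicitly and then apply the lower Ingham inequality. Let $(w_0,w_1)\in X_1$ with coefficients $b_{\pm k}$ given by \eqref{coef}, where $s=\nu-1/2$.

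\textbf{Step 1: trace of a single mode.} By Lemma \ref{boundtrac} (together with \eqref{obsop}, and using the small-argument asymptotics of $J_\nu$ and of $x^{\sigma}J_0$), we obtain
\[
\mathbb B_\nu^*\binom{\Phi_k}{\cdot}=\theta_k:=c_{\alpha,\beta,\mu}\,\frac{(2\kappa_\alpha)^{1/2}}{2^\nu\Gamma(\nu+1)}\,\frac{(j_{\nu,k})^\nu}{|J'_\nu(j_{\nu,k})|},
\]
where $c_{\alpha,\beta,\mu}\neq0$ is explicit. The trace passes through the series because the convergence is absolute, exactly as in the proof of Proposition \ref{admi}. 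This gives
\[
\mathbb B_\nu^*\mathbb T(t)(w_0,w_1)=\sum_{k\ge1}\theta_k\big(b_ke^{i\gamma_kt}+b_{-k}e^{-i\gamma_kt}\big)=\sum_{k\in\mathbb Z^*}\theta_{|k|}b_ke^{i\gamma_kt}.
\]

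\textbf{Step 2: lower bound on $\theta_k$.} The known asymptotics $j_{\nu,k}\sim\pi(k+\nu/2-1/4)$ and $|J'_\nu(j_{\nu,k})|\sim\sqrt{2/(\pi j_{\nu,k})}$ give $|\theta_k|\ge c\,k^{\nu+1/2}$ for all $k\ge1$. Since each $\theta_k$ is nonzero, this is uniform in $k$, and it is the estimate \eqref{below} already used for the upper bound. Consequently $|\theta_k|^2\ge c\,\lambda_k^{\nu+1/2}$.

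\textbf{Step 3: Ingham.} The frequencies satisfy $\gamma_{k+1}-\gamma_k=\kappa_\alpha(j_{\nu,k+1}-j_{\nu,k})\to\kappa_\alpha\pi$. The Ingham lower estimate for $T>2\pi/\gamma_\infty$ then applies. Here I would use the version of Ingham's inequality with an asymptotic gap, in the form of Haraux or Beurling: the asymptotic gap governs the critical time, and finitely many distinct frequencies are harmless. This version requires all $\gamma_k$, $k\in\mathbb Z^*$, to be distinct; this holds since $\gamma_{-k}=-\gamma_k$ and $j_{\nu,k}>0$. The asymptotic gap is $\gamma_\infty=\kappa_\alpha\pi=\pi(2-\alpha)/2$, so the critical time is $2\pi/\gamma_\infty=4/(2-\alpha)$, which matches the hypothesis of the statement. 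The inequality gives
\[
\int_0^T|\cdot|^2\,dt\ge c_T\sum_{k}|\theta_{|k|}|^2|b_k|^2\ge c\sum_k\lambda_{|k|}^{\nu+1/2}|b_k|^2 .
\]

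\textbf{Step 4: identification of the norm.} From \eqref{coef},
\[
|b_k|^2+|b_{-k}|^2=\tfrac12\Big(|\langle w_0,\Phi_k\rangle_{\mathcal H^{\nu+1/2}}|^2\lambda_k^{-2\nu-1}+|\langle w_1,\Phi_k\rangle_{\mathcal H^{\nu-1/2}}|^2\lambda_k^{-2\nu}\Big).
\]
Multiplying by $\lambda_k^{\nu+1/2}$ and summing over $k$ yields exactly $\tfrac12\|(w_0,w_1)\|_X^2$ by \eqref{expan}. This mirrors the computation in the proof of Lemma \ref{admiest}, with the inequalities reversed.

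\textbf{Main obstacle.} The main difficulty is Step 2 combined with the uniform constant in Step 3. For the lower bound on $\theta_k$ one needs $\theta_k\neq0$ for every $k$; in the critical case $\nu=0$ this has to be checked through the logarithmic trace, where $\mathcal O_{-\sigma}(x^\sigma J_0(\cdot))=J_0(0)=1$. For Step 3, the gap-based Ingham theorem must be applied with only an asymptotic gap, since the early gaps can be smaller than $\kappa_\alpha\pi$. I would first try Haraux's argument: remove finitely many frequencies, apply the classical Ingham inequality on the tail, and then add back the removed frequencies one at a time using the distinctness of the $\gamma_k$.
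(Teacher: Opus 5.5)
Your proposal is correct and matches the paper's proof step for step: the paper also writes $\mathbb B_\nu^*\mathbb T(t)(w_0,w_1)=\sum_{k\ge1}d_k(b_ke^{i\gamma_kt}+b_{-k}e^{-i\gamma_kt})$ with $d_k=\mathbb B_\nu^*(\Phi_k,0)$ computed from Lemma \ref{boundtrac}, and obtains $|d_k|^2\ge c\,\lambda_k^{\nu+1/2}$ from Lemma \ref{reduce}. It then applies a nonharmonic Fourier lower estimate with threshold $4/(2-\alpha)$ and identifies the resulting sum with $\|(w_0,w_1)\|_X^2$. The only difference is that the paper invokes Beurling's theorem (Theorem \ref{beur}) after computing $D^+=1/(\kappa_\alpha\pi)$, whereas you use Haraux's asymptotic-gap form of Ingham's inequality with gap $\kappa_\alpha\pi$; this gives the same critical time. (A small aside: your lower bound on $|\theta_k|$ actually rests on Lemma \ref{reduce} together with $\theta_k\neq0$, not on \eqref{below} as you remark in passing.)
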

\begin{proof}
First, we compute the upper Beurling density $D^+$ of $(\gamma_k)_{k\in \mathbb{Z}}$ defined in (\ref{dens}). By definition of $k^+(r)$ we have
$
\gamma_{k^+(r)} - \gamma_1
\sim r
\quad \text{as } r\to+\infty,
$
hence
\[
\frac{r}{k^+(r)}
\sim
\frac{1}{k^+(r)}
\sum_{k=1}^{k^+(r)-1}
(\gamma_{k+1}-\gamma_k).
\]
Lemma \ref{consec} implies that
\[
\frac1n\sum_{k=1}^{n-1}(\gamma_{k+1}-\gamma_k)\to \kappa_\alpha\pi,
\]
hence
\[
\frac{r}{k^+(r)}
\to
\kappa_\alpha \pi \quad \text{as } r\to+\infty, \quad\text{thus    } D^+=\limsup_{r\to+\infty}\frac{k^+(r)}{r}=\frac{1}{\kappa_\alpha \pi}.\]

The condition \(T>4/(2-\alpha)\) implies $T>2\pi D^+,$ hence Theorem \ref{beur} applied on the interval \(I=(0,T)\) gives
\[
c_T\sum_{k\in\mathbb Z^*}|a_k|^2
\le
\int_0^T
\left|
\sum_{k\in\mathbb Z^*}a_k e^{i\gamma_k t}
\right|^2dt
\]
for every finitely supported sequence \((a_k)_{k\in\mathbb Z^*}\).\\

Now let \((w_0,w_1)\in X_1\). From the representation formula \eqref{repre} with $s=\nu+1/2$ and
the definition of \(\mathbb B_\nu^*\), we can write
\[
\mathbb B_\nu^*\mathbb T(t)(w_0,w_1)
=
\sum_{k\ge1}
d_k
\left(
b_k e^{i\gamma_k t}+b_{-k}e^{-i\gamma_k t}
\right),
\]
where \(d_k=\mathbb B_\nu^*(\Phi_k,0)\). In the two cases
\(\nu=0\), \(\nu>0\), Lemmas \ref{reduce}-\ref{boundtrac} imply that there exists a constant \(c>0\) such that
\[
|d_k|^2\ge c\,(\kappa_\alpha j_{\nu,k})^{2\nu+1}=c\,\lambda_k^{\nu+1/2},
\qquad k\ge 1.
\]
Applying Theorem \ref{beur} to the sequence $a_k=d_k b_k$, $a_{-k}=d_k b_{-k}$, we obtain
\[
\begin{aligned}
\int_0^T
\left|
\mathbb B_\nu^*\mathbb T(t)(w_0,w_1)
\right|^2dt
&\ge
c_T
\sum_{k\ge1}
|d_k|^2\left(|b_k|^2+|b_{-k}|^2\right)  
\ge
c_T
\sum_{k\ge1}
\lambda_k^{\nu+1/2}
\left(|b_k|^2+|b_{-k}|^2\right)\\
&=c_T
\sum_{k\ge1}
\left(
|\langle w_0,\Phi_k\rangle_{\mathcal H^{\nu+3/2}}|^2
\lambda_k^{-\nu-5/2}
+
|\langle w_1,\Phi_k\rangle_{\mathcal H^{\nu+1/2}}|^2
\lambda_k^{-\nu-3/2}
\right)\\
&=
c_T\|(w_0,w_1)\|_X^2 .
\end{aligned}
\]
\end{proof}
\subsection{Definition of weak solution}\label{trans}
In this subsection we motivate the trace identities appearing in \eqref{obsop} and explain the definition of the observation operator
$\mathbb B_\nu^*$ by analyzing the boundary contribution arising from Green's identity at the singular endpoint $x=0$. Then we introduce the definition of weak solution $u$ for the system (\ref{problem}).\\
Assume that $u,w$ are smooth functions on $(0,1)\times(0,T)$ such that for $0<t<T$ we have $u(\cdot,t)$, $w(\cdot,t)\in D_{\max}$,
\[
u_{tt}+\A u=0,\quad w_{tt}+\A w=0,
\]
and $u(1)=w(1)=0$. Consider the functional
$
E(\tau)
=
\langle u_t(\cdot,\tau),w(\cdot,\tau)\rangle_{\beta}
-
\langle u(\cdot,\tau),w_t(\cdot,\tau)\rangle_{\beta}
$, then
\[
E'(\tau)
=
-\langle Au(\cdot,\tau),w(\cdot,\tau)\rangle_{\beta}
+
\langle u(\cdot,\tau),Aw(\cdot,\tau)\rangle_{\beta}=[u(\cdot,\tau),w(\cdot,\tau)](0),
\]
therefore $E(t)-E(0)=\int_0^t [u(\cdot,\tau),w(\cdot,\tau)](0) \mathrm{d}\tau$.\\

Next we analyze the bracket $[u(\cdot,\tau),w_0](0)$ with $w_0\in D(\A)$.\\
\textit{Assume} $\nu=0$. Let $u(x,t)$ be a sufficiently smooth controlled solution of (\ref{problem}) with a control $f$. Let \(y_\pm\) be the functions given in \eqref{critical}. Motivated by the local asymptotic expansion near the singular endpoint, we formally write
\begin{equation}\label{decomint}
u(x,t)\sim a_u(t)y_+(x)+b_u(t)y_-(x),
\end{equation}
where $
a_u(t)=-[u(\cdot,t),y_-](0)$, $
b_u(t)=[u(\cdot,t),y_+](0)$. We have used the bilinearity of the bracket and $[y_-,y_+](0)=1$, $[y_+,y_+](0)=[y_-,y_-](0)=0.$\\
Let now $w_0\in D(\A)$. By the definition of the self-adjoint realization \(\mathcal A\) we have
$
[w_0,y_+](0)=0$, see (\ref{bouncri}), which implies $w_0(x)\sim-[w_0,y_-](0)y_+(x)$ as $x\to 0^+$, hence
\[
\mathcal O_{-\sigma}(w_0)
=
-[w_0,y_-](0),
\]
and the decomposition (\ref{decomint}) together with the definition of $B_0$ imply that
\[
[u(\cdot,t),w_0](0)
= -[u(\cdot,t), y_+](0)[w_0,y_-](0)=-
(B_0u)(t)[w_0,y_-](0)
=
(B_0u)(t)\mathbb B_0^*
\binom{w_0}{w_1}=f(t)\mathbb B_0^*
\binom{w_0}{w_1}.
\]

\textit{Assume } $\nu>0$. Let $u(x,t)$ be a sufficiently smooth controlled solution of (\ref{problem}) with a control $f$. Let \(\phi_\pm\) be the functions given in \eqref{prin}. Near \(x=0\), we write
\begin{equation}\label{decomayor}
u(x,t)\sim a_u(t)\phi_+(x)+b_u(t)\phi_-(x),
\end{equation}
where $a_u(t)=-[u(\cdot,t), \phi_-](0)$, $b_u(t)=[u(\cdot,t), \phi_+](0)$.\\

\noindent\textit{Subcritical case} $0<\nu<1$. Let $w_0\in D(\A)$ then  $
[w_0,\phi_+](0)=0$, see (\ref{bounori}), which implies $w_0(x)\sim-[w_0,\phi_-](0)\phi_+(x)$ as $x\to 0^+$.\\
\textit{Limit-point case} $\nu\ge1$. In this case the non-principal solution \(\phi_-\notin L^2_\beta(0,1)\). Hence, for sufficiently regular $w_0$ the \(L^2_\beta\)(0,1)-admissible
asymptotic behavior at \(x=0\) is $w_0(x)\sim c\,\phi_+(x)$ as $x\to0^+.$ Since \([\phi_-,\phi_+](0)=1\), we have $w_0(x)\sim-[w_0,\phi_-](0)\phi_+(x)$ as $x\to 0^+$.\\

In both cases, the boundary coefficient associated with the principal asymptotic component satisfies
\[
\mathcal O_{-\sigma-\sqrt\Delta/2}(w_0)
=
-[w_0,\phi_-](0),
\]
and the decomposition (\ref{decomayor}) together with the definition of $B_\nu$ imply that
\begin{eqnarray*}
[u(\cdot,t),w_0](0)&=&-[u(\cdot,t), \phi_+](0)[w_0,\phi_-](0)=-\sqrt{\Delta}(B_\nu u)(t) [w_0,\phi_-](0)\\
			    &=&(B_\nu u)(t)\mathbb B_\nu^*\binom{w_0}{w_1}=f(t)\mathbb B_\nu^*\binom{w_0}{w_1}.
\end{eqnarray*}
These computations suggest the abstract formulation (\ref{ABSTRACT}) and the appearance of the term
\(
f(t)\mathbb B_\nu^*(w_0,w_1)
\)
in the weak formulation below.
\begin{definition}
Let $T>0$, $0\le \alpha<2$, $\beta,\mu\in\mathbb R$ with $\Delta\ge0$.
Given $
U_0=(u_0,u_1)\in X
=
\mathcal H^{\nu+1/2}\times \mathcal H^{\nu-1/2}
$ and $f\in L^2(0,T)$, we say that $U(t)=(u(t),u_t(t))$ is a weak solution of system~\eqref{problem} if $U\in C([0,T];X)$ and, for every $W=(w_0,w_1)\in X_1$, we have
\begin{equation}\label{debilW}
\begin{aligned}
\langle U(t)-U_0,W\rangle_X
&=
\int_0^t
\left\langle U(\tau),\mathbf A^* W\right\rangle_X\,d\tau
+
\int_0^t
f(\tau)\,\mathbb B_\nu^*W\,d\tau
\end{aligned}
\end{equation}
for all $0\le t\le T$.
Equivalently,
\[
\begin{aligned}
&\langle u(t)-u_0,w_0\rangle_{\mathcal H^{\nu+1/2}}
+
\langle u_t(t)-u_1,w_1\rangle_{\mathcal H^{\nu-1/2}}
\\
&=
\int_0^t
\left(
-\langle u(\tau),w_1\rangle_{\mathcal H^{\nu+1/2}}
+
\langle u_t(\tau),\mathcal A w_0\rangle_{\mathcal H^{\nu-1/2}}
\right)d\tau
+
\int_0^t
f(\tau)\,
\mathbb B_\nu^*
\begin{pmatrix}
w_0\\ w_1
\end{pmatrix}
d\tau .
\end{aligned}
\]
\end{definition}

For $\tau>0$, we introduce the input--state mapping $\Phi_\tau :L^2_{\mathrm{loc}}(0,\infty)\longrightarrow X_{-1}$ given by
\[
\Phi_{\tau} f
:=
\int_0^\tau \mathbb T(\tau-s)\,\mathbb B_\nu f(s)\,\mathrm{d}s, \quad f\in L^2_{\mathrm{loc}}(0,\infty),
\]
where the integral is understood in $X_{-1}$, and $\mathbb B_\nu$ is the control operator defined in (\ref{conope}).\\

\begin{remark}
\cite[Theorem 4.4.3]{tucwei} and Remark \ref {admiobco} imply that the dual operator $\mathbb B_\nu\in\mathcal L(\mathbb C,X_{-1})$ is an admissible control operator for $\mathbb{T}(t)$, $t\in \R$.
Furthermore, for every $T>0$, the input map
\[
\Phi_Tf:=\int_0^T\mathbb T(T-\sigma)\mathbb B_\nu f(\sigma)\,d\sigma
\]
defines a bounded operator $\Phi_T\in\mathcal L(L^2(0,T),X)$, see \cite[Proposition 4.2.2]{tucwei}.
\end{remark}

For sake of completeness we include Proposition~4.2.5 in \cite{tucwei}.
\begin{proposition}
Let $T>0$, $0\le \alpha<2$, $\beta,\mu\in\mathbb R$ with $\Delta\ge0$.
For every $U_0\in X$ and every $f\in L^2((0,T);\mathbb C)$, the initial value problem
\begin{equation}\label{ABSTRACT}
\dot U(t)=\mathbf A U(t)+\mathbb B_\nu f(t),
\qquad
U(0)=U_0,
\end{equation}
has a unique solution in $X_{-1}$, i.e.
\[
U(t)-U(0)
=
\int_0^t
\left(\mathbf A U(\sigma)+\mathbb B_\nu f(\sigma)\right)\,d\sigma
\quad\text{in }X_{-1},
\qquad
\forall t\in[0,T].
\]
This solution is given by
\[
U(t)=\mathbb T(t)U_0+\Phi_t f,
\qquad
t\in[0,T],
\]
and it satisfies
\[
U\in C([0,T];X)\cap H^1(0,T;X_{-1}).
\]
In particular, if $u(t)=\pi_1(U(t))$, then
\[
u\in C^0([0,T];\mathcal H^{\nu+1/2})
\cap C^1([0,T];\mathcal H^{\nu-1/2})
\cap H^2(0,T;\mathcal H^{\nu-3/2}).
\]

Moreover, the integral identity in $X_{-1}$ is equivalent to (\ref{debilW})
for all $(w_0,w_1)\in X_1$ and all $0\le t\le T$, see \cite[Remark 4.2.6]{tucwei}.
\end{proposition}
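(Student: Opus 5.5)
The proposition is essentially the abstract well-posedness result \cite[Proposition 4.2.5]{tucwei}, specialized to our setting. The plan is to check its hypotheses and then translate the abstract regularity into the scale $\mathcal H^s$.

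\textbf{Step 1: hypotheses.} The ingredients are already in place:
\begin{itemize}
\item $\mathbf A$ generates the unitary group $\mathbb T(t)$ on $X=\mathcal X_{\nu-1/2}$, with $X_1=\mathcal X_{\nu+1/2}$ and $X_{-1}=\mathcal X_{\nu-3/2}$ (Remark \ref{gelfand}).
\item $\mathbb B_\nu\in\mathcal L(\mathbb C,X_{-1})$ is the transpose of $\mathbb B_\nu^*\in\mathcal L(X_1,\mathbb C)$ (Proposition \ref{admi}).
\item $\mathbb B_\nu$ is an admissible control operator, by duality from Lemma \ref{admiest} and Remark \ref{admiobco} via \cite[Theorem 4.4.3]{tucwei}.
\end{itemize}

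\textbf{Step 2: existence, uniqueness and the formula.} Since $\mathbb T$ extends to a $C^0$-group on $X_{-1}$ and $\mathbb B_\nu f\in L^2(0,T;X_{-1})$, the Duhamel formula $U(t)=\mathbb T(t)U_0+\Phi_tf$ defines an element of $C([0,T];X_{-1})$. It satisfies the integral identity in $X_{-1}$; this is checked by integrating the group identity $\frac{d}{dt}\mathbb T(t)Z=\mathbf A\mathbb T(t)Z$ in $X_{-2}$ and applying Fubini.

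For uniqueness, take the difference $V$ of two solutions. It satisfies $V(t)=\int_0^t\mathbf AV$ in $X_{-1}$. Applying $\mathbf A^{-1}$ reduces this to a classical equation $\dot{\tilde V}=\mathbf A\tilde V$, $\tilde V(0)=0$, which gives $V=0$.

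\textbf{Step 3: regularity.} Admissibility gives $\Phi_t\in\mathcal L(L^2(0,T),X)$ for each $t$. Continuity of $t\mapsto\Phi_tf$ in $X$ follows from the standard argument combining the composition property $\Phi_{t+h}f=\mathbb T(h)\Phi_tf+\Phi_h(\text{shifted }f)$ with strong continuity of $\mathbb T$. Hence $U\in C([0,T];X)$. Moreover $\dot U=\mathbf AU+\mathbb B_\nu f\in L^2(0,T;X_{-1})$, since $\mathbf A\in\mathcal L(X,X_{-1})$; this gives $U\in H^1(0,T;X_{-1})$.

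\textbf{Step 4: the first component.} Reading off $u=\pi_1U$ from $X=\mathcal H^{\nu+1/2}\times\mathcal H^{\nu-1/2}$ and $X_{-1}=\mathcal H^{\nu-1/2}\times\mathcal H^{\nu-3/2}$ yields:
\begin{itemize}
\item $u\in C^0([0,T];\mathcal H^{\nu+1/2})$;
\item $u_t=\pi_2U\in C^0([0,T];\mathcal H^{\nu-1/2})$;
\item $u_{tt}=\pi_2\dot U\in L^2(0,T;\mathcal H^{\nu-3/2})$.
\end{itemize}

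\textbf{Step 5: equivalence with \eqref{debilW}.} Pair the $X_{-1}$ identity with $W\in X_1$ through the duality $X_{-1}$–$X_1$ with pivot $X$. Use $\langle \mathbf AU,W\rangle=\langle U,\mathbf A^*W\rangle_X$ for $U\in X$, together with \eqref{conope}. Conversely, since $X_1$ is dense and separates points of $X_{-1}$, \eqref{debilW} for all $W\in X_1$ recovers the $X_{-1}$ identity.

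\textbf{Main obstacle.} I expect the most delicate point to be the continuity of $t\mapsto\Phi_tf$ into $X$ rather than merely into $X_{-1}$. If the composition argument of Step 3 becomes awkward, I would instead argue from the Ingham upper bound of Lemma \ref{admiest} directly, via the spectral representation $\langle\Phi_tf,\lambda_{|k|}^{-s/2}\phi_k\rangle=\overline{d_k}\int_0^te^{i\gamma_k(t-\tau)}f(\tau)\,d\tau$, up to normalization.
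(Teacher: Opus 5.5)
Your overall route is the paper's. The paper gives no argument beyond invoking \cite[Proposition 4.2.5 and Remark 4.2.6]{tucwei} once admissibility of $\mathbb B_\nu$ is known, and your Steps 1--3 and 5 are a faithful outline of that abstract proof.

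\textbf{The problem is Step 4.} Writing $u_t=\pi_2U$ and $u_{tt}=\pi_2\dot U$ presupposes $\pi_1\dot U=\pi_2U$. That is, it assumes $\mathbb B_\nu f$ has zero first component, as in the usual second-order formulation where the control operator has the form $\binom{0}{B_0}$. In this paper the opposite happens. The duality $X_{-1}\times X_1$ is taken with pivot $X$, whose inner product pairs first components with first components and second with second. Moreover, $\mathbb B_\nu^*W$ in \eqref{obsop} depends only on $w_0$. Hence \eqref{conope} forces $\mathbb B_\nu f=(f\,b,0)$, where $b$ is determined (up to the conjugation convention) by $\lambda_k^{\nu+1/2}\langle b,\Phi_k\rangle_\beta=\overline{d_k}$, with $d_k=\mathbb B_\nu^*(\Phi_k,0)\neq 0$. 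Since $|d_k|^2\asymp\lambda_k^{\nu+1/2}$, one gets $\|b\|^2_{\mathcal H^{\nu-1/2}}\asymp\sum_k\lambda_k^{-1}<\infty$, but $b\notin\mathcal H^{\nu+1/2}$. So the first row of the $X_{-1}$ identity reads $\dot u=\pi_2U+f\,b$, not $\dot u=\pi_2U$.

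\textbf{Consequences.} From $U\in C([0,T];X)\cap H^1(0,T;X_{-1})$ you only obtain $u\in C([0,T];\mathcal H^{\nu+1/2})\cap H^1(0,T;\mathcal H^{\nu-1/2})$. Using the second row, $\frac{d}{dt}\pi_2U=-\mathcal A u$, you also get $\pi_2U\in C([0,T];\mathcal H^{\nu-1/2})\cap C^1([0,T];\mathcal H^{\nu-3/2})$. The literal claims $u\in C^1([0,T];\mathcal H^{\nu-1/2})$ and $u\in H^2(0,T;\mathcal H^{\nu-3/2})$ fail for general $f\in L^2(0,T)$. Pair $\dot u-\pi_2U=f\,b$ with an eigenfunction $\Phi_k$, for which $\langle b,\Phi_k\rangle_\beta\neq 0$. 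This shows the claims would force $f$ to agree a.e.\ with a continuous function, respectively to belong to $H^1(0,T)$, and both fail for a step function.

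The paper supplies no separate argument for this line either; it rests on the same tacit identification built into the notation $U=(u,u_t)$. That part of the statement is therefore correct only if $u_t$ is read as a name for the second component $\pi_2U$. Your Step 4 should either state this convention explicitly or be replaced by the weaker regularity above.
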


\subsection{Exact observability and controllability}
\begin{definition}
Let $T>0$ be fixed. The pair $(\mathbf A,\mathbb B_\nu)$ is said to be exactly controllable in time $T$ if $
\operatorname{Ran}\Phi_T=X .
$
\end{definition}

\textbf{Proof of Theorem \ref{main}.}
By \cite[Theorem 11.2.1]{tucwei} the pair $(\mathbf A,\mathbb B_\nu)$ is exactly controllable in time $T$ if and only if $(\mathbf A^*=-\mathbf A,\mathbb B_\nu^*)$
is exactly observable in time $T$. By Proposition \ref{observa} and \cite[(6.1.1)]{tucwei} we have that $(-\mathbf A,\mathbb B_\nu^*)$ is exactly observable in any time $T>4/(2-\alpha)$ and the result follows.
\begin{acknowledgement}
	This work was supported by DGAPA-UNAM [PAPIIT IN117525].
	\end{acknowledgement}
		    
\appendix
\section{Bessel functions}
We introduce the Bessel function of the first kind $J_{\nu}$ as follows
\begin{equation}\label{bessel}
J_{\nu}(x)=\sum_{m \geq 0} \frac{(-1)^{m}}{m ! \Gamma(m+\nu+1)}\left(\frac{x}{2}\right)^{2 m+\nu}, \quad x \geq 0,
\end{equation}
where $\Gamma(\cdot)$ is the Gamma function. In particular, for $\nu>-1$ and $0<x \leq \sqrt{\nu+1}$, from (\ref{bessel}) we have (see \cite[9.1.7, p. 360]{abram})
\begin{equation}\label{besasin}
J_{\nu}(x) \sim \frac{1}{\Gamma(\nu+1)}\left(\frac{x}{2}\right)^{\nu} \quad \text { as } \quad x \rightarrow 0^{+} .
\end{equation}
Bessel functions of the first kind satisfy the recurrence formula (see \cite[9.1.27, p. 360]{abram}):
\begin{equation}\label{recur}
x J_{\nu}^{\prime}(x)-\nu J_{\nu}(x)=-x J_{\nu+1}(x).
\end{equation}
For $\nu>0$, and $\nu\notin\mathbb N$, we introduce the Bessel function of the second kind
\[
Y_\nu(z)
:=
\frac{
J_\nu(z)\cos(\nu\pi)-J_{-\nu}(z)
}{
\sin(\nu\pi)
}.
\]
From (\ref{bessel}) we have
\begin{equation}\label{asinu}
Y_\nu(x)
=
-\frac{\Gamma(\nu)}{\pi}\left(\frac{2}{x}\right)^\nu+
\frac{\cot(\pi\nu)}{\Gamma(\nu+1)}
\left(\frac{x}{2}\right)^\nu+o(x^\nu)\quad\text{as } \quad x \rightarrow 0^{+} .
\end{equation}
For $\nu=n\in\mathbb N$, we define
\[
Y_n(x)
:=
\lim_{\nu\to n}
\frac{
J_\nu(x)\cos(\nu\pi)-J_{-\nu}(x)
}{
\sin(\nu\pi)
}.
\]
Therefore,
\begin{equation}\label{asina}
Y_n(x)
=
-\frac{(n-1)!}{\pi}\left(\frac{2}{x}\right)^{n}
+
\frac{2}{\pi}\log\!\left(\frac{x}{2}\right)J_n(x)
+O(x^n)\quad\text {as } \quad x \rightarrow 0^{+} ,
\end{equation}
\begin{equation}\label{asin0}
Y_0(x)=\frac{2}{\pi}\left(\log\frac{x}{2}+\gamma\right)
+O(x^2\log x), \quad\text{as } \quad x \rightarrow 0^{+},
\end{equation}
where $\gamma$ denotes the Euler–Mascheroni constant.\\

Recall the asymptotic behavior of the Bessel function $J_{\nu}$ for large $x$, see \cite[Lem. 7.2, p. 129]{komo}.
\begin{lem}\label{asimxinf}
For any $\nu \in \mathbb{R}$
$$
J_{\nu}(x)=\sqrt{\frac{2}{\pi x}}\left\{\cos \left(x-\frac{\nu \pi}{2}-\frac{\pi}{4}\right)+\mathcal{O}\left(\frac{1}{x}\right)\right\} \quad \text { as } \quad x \rightarrow \infty.
$$
\end{lem}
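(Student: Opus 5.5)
This is the classical large-argument asymptotic for $J_\nu$, quoted from Komornik--Loreti. I would prove it with the Hankel (Poisson-type) integral representation and an elementary endpoint analysis.

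\emph{Reduction to $\nu>-1/2$.} The recurrence $J_{\nu-1}(x)+J_{\nu+1}(x)=\frac{2\nu}{x}J_\nu(x)$ extends the result to all real $\nu$. Indeed, if the expansion holds for $\nu$ and $\nu+1$, then
\[
J_{\nu-1}=\tfrac{2\nu}{x}J_\nu-J_{\nu+1}.
\]
The first term is $\mathcal O(x^{-3/2})$, which is absorbed into the error. The phase of $J_{\nu+1}$ is shifted by $-\pi/2$ relative to that of $J_\nu$, so
\[
-\cos\bigl(x-\tfrac{(\nu+1)\pi}{2}-\tfrac{\pi}{4}\bigr)=\cos\bigl(x-\tfrac{(\nu-1)\pi}{2}-\tfrac{\pi}{4}\bigr),
\]
which is exactly the required leading term for $J_{\nu-1}$. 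Descending induction from a base interval then gives every $\nu$. (For negative non-integer $\nu$, the series (\ref{bessel}) still satisfies the same recurrence.)

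\emph{Base case $\nu>-1/2$.} Start from Poisson's integral
\[
J_\nu(x)=\frac{(x/2)^\nu}{\sqrt\pi\,\Gamma(\nu+1/2)}\int_{-1}^{1}e^{ixt}(1-t^2)^{\nu-1/2}\,dt,
\]
which can be verified termwise from (\ref{bessel}) using Beta integrals. The oscillatory integral has no interior stationary points, so its asymptotics come from the endpoints $t=\pm1$.

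Near $t=1$, substitute $t=1-s$ and write
\[
(1-t^2)^{\nu-1/2}=s^{\nu-1/2}(2-s)^{\nu-1/2}.
\]
Next, freeze $(2-s)^{\nu-1/2}\approx 2^{\nu-1/2}$ and cut off smoothly away from the other endpoint. Then use
\[
\int_0^\infty e^{-ixs}s^{\nu-1/2}\,ds=\Gamma(\nu+1/2)\,(ix)^{-\nu-1/2},
\]
obtained by rotating the contour. This gives the contribution
\[
e^{ix}\,2^{\nu-1/2}\,\Gamma(\nu+1/2)\,e^{-i\pi(\nu+1/2)/2}x^{-\nu-1/2}+\mathcal O(x^{-\nu-3/2}).
\]
The endpoint $t=-1$ gives the complex conjugate. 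Summing the two and multiplying by the prefactor yields
\[
\sqrt{\tfrac{2}{\pi x}}\cos\bigl(x-\tfrac{\nu\pi}{2}-\tfrac{\pi}{4}\bigr)+\mathcal O(x^{-3/2}),
\]
which is the claimed form.

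\emph{Main obstacle.} The error bound at the endpoints is the delicate step. The correction $s^{\nu-1/2}\bigl[(2-s)^{\nu-1/2}-2^{\nu-1/2}\bigr]=\mathcal O(s^{\nu+1/2})$ must be shown to contribute only $\mathcal O(x^{-\nu-3/2})$. I would do this by one integration by parts, or equivalently by applying the same Gamma-integral bound with exponent $\nu+1/2$. The cutoff remainders decay faster than any power, by repeated integration by parts. For $-1/2<\nu<1/2$ the weight is integrable but singular at the endpoints, so the contour-rotation argument is needed rather than naive integration by parts.

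An alternative route would set $u=\sqrt{x}\,J_\nu(x)$, so that
\[
u''+\Bigl(1-\frac{\nu^2-1/4}{x^2}\Bigr)u=0.
\]
Variation of constants against $\cos$ and $\sin$ then gives $u=A\cos(x-\theta)+\mathcal O(1/x)$, since the perturbation $\mathcal O(x^{-2})$ is integrable. However, identifying the constants $A=\sqrt{2/\pi}$ and $\theta=\nu\pi/2+\pi/4$ would still require the integral representation. For that reason I would use the integral approach directly.
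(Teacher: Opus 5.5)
Your argument is correct, and it goes beyond what the paper does: the paper gives no proof of this lemma and simply quotes it from Komornik--Loreti \cite[Lem.~7.2]{komo}. You derive it in three steps. First, Poisson's integral, which you correctly check termwise against (\ref{bessel}) via Beta integrals. Second, an endpoint analysis of a Fourier integral with algebraic endpoint singularities (Erd\'elyi's lemma). Third, a descending induction with $J_{\nu-1}=\frac{2\nu}{x}J_\nu-J_{\nu+1}$. The phase bookkeeping and the constant $\sqrt{2/\pi}$ both check out.

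When you write it out, tighten two points:
\begin{enumerate}
\item For $\nu\ge 1/2$ the uncut integral $\int_0^\infty e^{-ixs}s^{\nu-1/2}\,ds$ diverges. Use the Gamma identity only with a factor $e^{-\epsilon s}$, or only on the cut-off integral, and control the tail by integrations by parts that are uniform in $\epsilon$.
\item For the freezing error, use that the correction is exactly $s^{\nu+1/2}h(s)$ with $h$ smooth on $[0,2)$; a bare size bound $\mathcal O(s^{\nu+1/2})$ gives no oscillatory decay. Combine this with $\int_0^\infty e^{-ixs}s^{a-1}\psi(s)\,ds=\mathcal O(x^{-a})$ for $a>0$ and smooth compactly supported $\psi$. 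That estimate follows from one integration by parts plus induction on $a$, with base case $0<a\le 1$ obtained by splitting at $s=1/x$.
\end{enumerate}

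As for what each route buys: the citation keeps the paper short. Your route is self-contained and pins down amplitude and phase explicitly, which, as you note, the variation-of-constants argument for $\sqrt{x}J_\nu$ cannot do alone. It also extends to the full Hankel expansion.

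Finally, the paper only uses the lemma for $\nu\ge 0$, namely for $J_\nu$ and $J_{\nu+1}$ in Lemma \ref{reduce}. So your base case $\nu>-1/2$ already covers every application, and the recurrence step is needed only for the literal claim ``any $\nu\in\mathbb R$''.
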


For $\nu >-1$ the Bessel function $J_{\nu}$ has an infinite number of real zeros $0<j_{\nu, 1}<j_{\nu, 2}<\ldots$, all of which are simple, with the possible exception of $x=0$. In \cite[Proposition 7.8, p. 135]{komo} we can find the next information about the location of the zeros of the Bessel functions $J_{\nu}$:
\begin{lem}\label{consec}Let $\nu \geq 0$.\\
1. The difference sequence $\left(j_{\nu, k+1}-j_{\nu, k}\right)_{k}$ converges to $\pi$ as $k \rightarrow\infty$.\\
2. The sequence $\left(j_{\nu, k+1}-j_{\nu, k}\right)_{k}$ is strictly decreasing if $|\nu|>\frac{1}{2}$, strictly increasing if $|\nu|<\frac{1}{2}$, and constant if $|\nu|=\frac{1}{2}$.\\
\end{lem}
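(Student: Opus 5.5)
The plan is to reduce everything to a one-dimensional Sturm comparison argument for the Liouville normal form of Bessel's equation, rather than relying on the asymptotics of Lemma \ref{asimxinf}. Set $u(x):=\sqrt{x}\,J_\nu(x)$ for $x>0$. A direct computation from Bessel's equation $x^2J_\nu''+xJ_\nu'+(x^2-\nu^2)J_\nu=0$ gives
\[
u''(x)+q(x)\,u(x)=0,\qquad q(x):=1-\frac{\nu^2-1/4}{x^2}.
\]
Moreover, $u$ has exactly the same positive zeros $j_{\nu,k}$ as $J_\nu$, and they are simple. Both assertions of the lemma are then statements about the spacing of consecutive zeros of a solution of $u''+qu=0$, where $q$ is monotone on $(0,\infty)$ and $q(x)\to1$ as $x\to\infty$.

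\emph{Step 1 (limit of the gaps).} Fix $\varepsilon\in(0,1)$ and choose $X$ so that $1-\varepsilon<q(x)<1+\varepsilon$ for $x\ge X$. We apply the Sturm comparison theorem on $[X,\infty)$ twice.
\begin{itemize}[leftmargin=*]
\item Compare $u$ with $\sin(\sqrt{1+\varepsilon}\,(x-j_{\nu,k}))$, which vanishes at $j_{\nu,k}$. Since $q<1+\varepsilon$, this comparison function has another zero in $(j_{\nu,k},j_{\nu,k+1}]$, so $j_{\nu,k+1}-j_{\nu,k}\ge \pi/\sqrt{1+\varepsilon}$.
\item Compare $u$ with $\sin(\sqrt{1-\varepsilon}\,(x-j_{\nu,k}))$. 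Since $q>1-\varepsilon$, the function $u$ must vanish in $(j_{\nu,k},\,j_{\nu,k}+\pi/\sqrt{1-\varepsilon})$, so $j_{\nu,k+1}-j_{\nu,k}\le \pi/\sqrt{1-\varepsilon}$.
\end{itemize}
These bounds hold for all $k$ with $j_{\nu,k}\ge X$. Letting $\varepsilon\to0$ yields $j_{\nu,k+1}-j_{\nu,k}\to\pi$. The same comparison, applied once with a lower bound on $q$, also shows that $u$ has infinitely many zeros, so the sequence is well defined.

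\emph{Step 2 (monotonicity).} If $|\nu|=1/2$, then $q\equiv1$ and $u=c\sin x$ (the zeros are positive, so the $\cos$ branch for $\nu=-1/2$ gives the same spacing). Hence every gap equals $\pi$.

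If $|\nu|>1/2$, then $q$ is strictly increasing. Let $d_k:=j_{\nu,k+1}-j_{\nu,k}$ and consider the shifted function $v(x):=u(x+d_k)$, which solves $v''+q(x+d_k)v=0$ and has zeros at $j_{\nu,k}$ and $j_{\nu,k+1}-d_k$. Since $q(x+d_k)>q(x)$, the strict Sturm comparison theorem applies. We compare $v$ with $u$ on $[j_{\nu,k},j_{\nu,k+1}]$, which is where $v$ corresponds to the interval $[j_{\nu,k+1},j_{\nu,k+2}]$ for $u$. The conclusion is that $v$ vanishes strictly inside $(j_{\nu,k},j_{\nu,k+1})$, that is, $d_{k+1}<d_k$. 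If $|\nu|<1/2$, $q$ is strictly decreasing and the same argument with the inequality reversed gives $d_{k+1}>d_k$.

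The main obstacle I expect is making the strict comparison in Step 2 rigorous. My first attempt will be the Picone/Wronskian identity. Let $W=u v'-u' v$; then $W'=(q(x)-q(x+d_k))uv$. Integrating $W'$ over the interval between consecutive zeros, and assuming for contradiction that $v$ has no interior zero, gives a sign contradiction: the boundary terms and the strictly signed integrand cannot be compatible. Strictness comes from $q(x)\ne q(x+d_k)$ on a set of positive measure, which holds because $q$ is strictly monotone when $|\nu|\neq1/2$.
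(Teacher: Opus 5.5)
Your proof is correct. The paper gives no argument for this lemma; it quotes it from Komornik--Loreti (Proposition 7.8). Your proposal is therefore a self-contained replacement rather than a reconstruction of the paper's proof.

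You pass to the Liouville normal form $u=\sqrt{x}\,J_\nu$, which satisfies $u''+\bigl(1-(\nu^2-\tfrac14)/x^2\bigr)u=0$. You then use Sturm comparison twice: against $\sin\bigl(\sqrt{1\pm\varepsilon}\,(x-j_{\nu,k})\bigr)$ to get the limit of the gaps, and against the translate $v(x)=u(x+d_k)$ to get monotonicity.

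This gives more than the tools already in the appendix. Part 1 can be read off directly from McMahon's expansion \eqref{asint}, which gives $d_k=\pi+O(k^{-2})$. That is also all the paper actually uses, in the gap condition and the density computation. Part 2 cannot be obtained this way. The difference $d_k-d_{k+1}$ is of order $k^{-3}$, the same order as the remainder in \eqref{asint}, and an asymptotic formula says nothing about small $k$ anyway. Your comparison with the translate is what yields strict monotonicity for every $k\ge1$.

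Two small slips in Step 2 should be fixed.

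\emph{Zeros of $v$.} The two zeros you list, $j_{\nu,k}$ and $j_{\nu,k+1}-d_k$, are the same point. What you need is that the zeros of $v$ are the numbers $j_{\nu,m}-d_k$. So the first zero of $v$ to the right of $j_{\nu,k}$ is $j_{\nu,k+2}-d_k=j_{\nu,k}+d_{k+1}$. A zero of $v$ in $(j_{\nu,k},j_{\nu,k+1})$ is then exactly the inequality $d_{k+1}<d_k$. Likewise, $v$ on $[j_{\nu,k},j_{\nu,k+1}]$ corresponds to $u$ on $[j_{\nu,k+1},j_{\nu,k+1}+d_k]$, not on $[j_{\nu,k+1},j_{\nu,k+2}]$.

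\emph{The step you flag as the main obstacle is routine.} Change the signs of $u$ and/or $v$ if necessary, which does not move their zeros, so that $u,v>0$ on $(a,b)=(j_{\nu,k},j_{\nu,k+1})$. Then $W(a)=0$ and $W(b)=-u'(b)v(b)\ge0$. Yet for $\nu>1/2$ we have $W(b)-W(a)=\int_a^b\bigl(q(x)-q(x+d_k)\bigr)uv\,dx<0$, a contradiction. For $\nu<1/2$, run the same computation on $(j_{\nu,k},j_{\nu,k}+d_{k+1})$, the interval between consecutive zeros of $v$, with the roles of $u$ and $v$ exchanged. This gives $j_{\nu,k+1}<j_{\nu,k}+d_{k+1}$, i.e.\ $d_k<d_{k+1}$.
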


For $\nu \geq 0$ fixed, we consider the next asymptotic expansion of the zeros of the Bessel function $J_{\nu}$, see\cite[Section 15.53, p. 506]{Watson},
\begin{equation}\label{asint}
j_{\nu, k}=\left(k+\frac{\nu}{2}-\frac{1}{4}\right) \pi-\frac{4 \nu^{2}-1}{8\left(k+\frac{\nu}{2}-\frac{1}{4}\right) \pi}+O\left(\frac{1}{k^{3}}\right), \quad \text { as } k \rightarrow\infty
\end{equation}

In particular we have
\begin{equation}\label{below}
\begin{aligned}
&j_{\nu, k} \geq\left(k-\frac{1}{4}\right) \pi \quad \text { for } \nu \in\left[0, 1/2\right], \\
&j_{\nu, k} \geq\left(k-\frac{1}{8}\right) \pi \quad \text { for } \nu \in\left[1/2,\infty\right].
\end{aligned}
\end{equation}

\begin{lem}\label{reduce} For any $\nu \geq 0$ and any $k\geq 1$ we have
$$
\sqrt{j_{\nu, k}}\left|J_{\nu}^{\prime}\left(j_{\nu, k}\right)\right|=\sqrt{\frac{2}{\pi}}+O\left(\frac{1}{j_{\nu, k}}\right)\quad \text{as}\quad k \rightarrow \infty.
$$
\end{lem}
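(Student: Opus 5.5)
We plan to derive Lemma \ref{reduce} from the large-argument asymptotics of $J_\nu$ and $J_\nu'$ evaluated at the zeros $j_{\nu,k}$, using the zero asymptotics \eqref{asint}. The starting point is the recurrence \eqref{recur}: at $x=j_{\nu,k}$ we have $J_\nu(j_{\nu,k})=0$, hence $J_\nu'(j_{\nu,k})=-J_{\nu+1}(j_{\nu,k})$. It therefore suffices to show
\[
\sqrt{j_{\nu,k}}\,|J_{\nu+1}(j_{\nu,k})|=\sqrt{\tfrac{2}{\pi}}+O\!\left(\tfrac{1}{j_{\nu,k}}\right),
\]
and for this we can use Lemma \ref{asimxinf} with index $\nu+1$. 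That lemma gives
\[
\sqrt{j_{\nu,k}}\,J_{\nu+1}(j_{\nu,k})=\sqrt{\tfrac{2}{\pi}}\Bigl\{\cos\bigl(j_{\nu,k}-\tfrac{(\nu+1)\pi}{2}-\tfrac{\pi}{4}\bigr)+O\bigl(j_{\nu,k}^{-1}\bigr)\bigr\}.
\]

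The key step is to show that the cosine equals $\pm1+O(j_{\nu,k}^{-2})$, or at least $\pm1+O(j_{\nu,k}^{-1})$. We write $\theta_k:=j_{\nu,k}-\frac{\nu\pi}{2}-\frac{\pi}{4}$. The argument of the cosine is then $\theta_k-\frac{\pi}{2}$, so it equals $\sin\theta_k$.

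From \eqref{asint} we have $j_{\nu,k}=(k+\frac{\nu}{2}-\frac14)\pi+O(1/k)$. Hence $\theta_k=(k-\frac12)\pi+\varepsilon_k$ with $\varepsilon_k=O(1/k)$, and
\[
|\sin\theta_k|=|\cos\varepsilon_k|=1+O(\varepsilon_k^2)=1+O(k^{-2}).
\]
Since $j_{\nu,k}\asymp k$, this gives $|\sqrt{j_{\nu,k}}J_{\nu+1}(j_{\nu,k})|=\sqrt{2/\pi}\,\bigl(1+O(j_{\nu,k}^{-1})\bigr)$. Taking absolute values together with $|J_\nu'|=|J_{\nu+1}|$ yields the claim.

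The main point requiring care is the error term in Lemma \ref{asimxinf}. It is only $O(1/x)$ uniformly for $x\to\infty$ with $\nu$ fixed, which is enough here since $\nu$ is fixed and $j_{\nu,k}\to\infty$. The statement is an asymptotic one as $k\to\infty$, so the phrase ``for any $k\ge1$'' only fixes the indexing and imposes no uniform bound for small $k$. If a uniform-in-$k$ bound were wanted, finitely many $k$ can be absorbed into the constant, because $J_\nu'(j_{\nu,k})\neq0$ by simplicity of the zeros.

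An alternative, which avoids relying on the zero expansion \eqref{asint}, is to use the Hankel expansions for both $J_\nu$ and $Y_\nu$. Writing $J_\nu(x)=\sqrt{2/(\pi x)}\,M_\nu(x)\cos\vartheta_\nu(x)$, the zeros satisfy $\cos\vartheta_\nu=0$, and at those points $|J_\nu'|=\sqrt{2/(\pi x)}\,M_\nu\,\vartheta_\nu'\bigl(1+O(x^{-1})\bigr)$ with $M_\nu\to1$ and $\vartheta_\nu'\to1$. We would keep this only as a fallback; the first route is shorter.
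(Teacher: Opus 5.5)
Your argument is correct and follows the route the paper indicates. The paper leaves the proof to the reader, pointing to Lemma \ref{asimxinf} and the recurrence \eqref{recur}: reduce $J_\nu'(j_{\nu,k})$ to $-J_{\nu+1}(j_{\nu,k})$ and read off its size from the large-argument asymptotics. The only difference is that you fix the phase with the zero expansion \eqref{asint}, whereas Lemma \ref{asimxinf} applied to $J_\nu$ itself also suffices: $J_\nu(j_{\nu,k})=0$ forces $\cos\theta_k=O(j_{\nu,k}^{-1})$, hence $|\sin\theta_k|=1+O(j_{\nu,k}^{-2})$.
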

The proof of this result is left to the reader using Lemma \ref{asimxinf} and the recurrence formula (\ref{recur}).\\

\begin{lem}\label{boundtrac}
Let \((\Phi_k)_{k\ge1}\) be the family of eigenfunctions in
\eqref{Phik}. Then the following boundary trace formulas hold.
\begin{enumerate}[label=\rm(\roman*)]
\item If \(\nu=0\), then
\[
\mathcal O_{-\sigma}(\Phi_k)
=
\frac{(2\kappa_\alpha)^{1/2}}
{|J_0'(j_{0,k})|}.
\]
\item If \(\nu>0\), then
\[
\mathcal O_{-\sigma-\sqrt\Delta/2}(\Phi_k)
=
\frac{(2\kappa_\alpha)^{1/2}}
{2^\nu\Gamma(\nu+1)}
\frac{j_{\nu,k}^\nu}
{|J_\nu'(j_{\nu,k})|}.
\]
\end{enumerate}
\end{lem}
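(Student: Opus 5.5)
Both trace formulas should follow from the explicit form \eqref{Phik} of $\Phi_k$ combined with the small-argument asymptotics \eqref{besasin} of $J_\nu$. We treat the two cases separately but in the same way: first factor out the power $x^{\sigma}$ that appears in $\Phi_k$, then read off the limit of the remaining Bessel factor.

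\emph{Case $\nu>0$.} Here $\delta=-\sigma-\sqrt\Delta/2$, so
\[
x^{\delta}\Phi_k(x)=\frac{(2\kappa_\alpha)^{1/2}}{|J_\nu'(j_{\nu,k})|}\,x^{-\sqrt\Delta/2}J_\nu(j_{\nu,k}x^{\kappa_\alpha}).
\]
By \eqref{para} we have $\sqrt\Delta/2=\nu\kappa_\alpha$, hence $x^{-\sqrt\Delta/2}=(x^{\kappa_\alpha})^{-\nu}$. Put $z=j_{\nu,k}x^{\kappa_\alpha}$, which tends to $0^+$ as $x\to0^+$ because $\kappa_\alpha>0$ (this uses $\alpha<2$). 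Then
\[
x^{-\sqrt\Delta/2}J_\nu(j_{\nu,k}x^{\kappa_\alpha})=j_{\nu,k}^{\nu}\,z^{-\nu}J_\nu(z).
\]
By \eqref{besasin}, $z^{-\nu}J_\nu(z)\to 1/(2^\nu\Gamma(\nu+1))$ as $z\to0^+$. This gives exactly formula (ii). The same computation covers both the subcritical and the limit-point regimes, since it uses only $\nu>0$ and never the boundary conditions.

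\emph{Case $\nu=0$.} Here $\delta=-\sigma$, so
\[
x^{-\sigma}\Phi_k(x)=\frac{(2\kappa_\alpha)^{1/2}}{|J_0'(j_{0,k})|}\,J_0(j_{0,k}x^{\kappa_\alpha}).
\]
Since $J_0$ is continuous with $J_0(0)=1$ by \eqref{bessel}, the limit as $x\to0^+$ is $(2\kappa_\alpha)^{1/2}/|J_0'(j_{0,k})|$, which is formula (i).

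The argument is essentially routine. The only points needing care are the identification $\sqrt\Delta/2=\nu\kappa_\alpha$, which makes the power of $x$ cancel the singular factor exactly, and the fact that $j_{\nu,k}>0$, so that the factor $j_{\nu,k}^\nu$ is positive. No absolute values need adjusting, since the normalizing constant in \eqref{Phik} already carries $|J_\nu'(j_{\nu,k})|$.
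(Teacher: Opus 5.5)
Your proof is correct and follows the paper's argument: for $\nu=0$ you use $J_0(0)=1$, and for $\nu>0$ you substitute $z=j_{\nu,k}x^{\kappa_\alpha}$, use $\sqrt\Delta/2=\nu\kappa_\alpha$ to rewrite $x^{-\sigma-\sqrt\Delta/2}\Phi_k(x)$ as a constant times $j_{\nu,k}^\nu z^{-\nu}J_\nu(z)$, and conclude with the small-argument asymptotics \eqref{besasin}. Your explicit check that $\sqrt\Delta/2=\nu\kappa_\alpha$ is the step the paper leaves implicit.
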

\begin{proof}
If \(\nu=0\), then \(J_0(0)=1\), and the formula follows immediately from \eqref{Phik}. 
Assume that \(\nu>0\). Set $
z:=j_{\nu,k}x^{\kappa_\alpha},
$ then
\[
x^{-\sigma-\sqrt\Delta/2}\Phi_k(x)
=
\frac{(2\kappa_\alpha)^{1/2}j_{\nu,k}^{\nu}}
{|J_\nu'(j_{\nu,k})|}
z^{-\nu}J_\nu(z),
\]
and the result follows from (\ref{besasin}).
\end{proof}

The so-called Beurling's theorem can be found in \cite[Theorem 9.2]{komo}.
\begin{teoA}\label{beur}
Let $(\gamma_k)_{k\in\mathbb Z}\subset \mathbb R$ be a sequence satisfying the uniform gap condition
\begin{equation}\label{gap}
\gamma_{k+1}-\gamma_k \ge \delta > 0,
\qquad \forall k\in\mathbb Z.
\end{equation}
Define the upper Beurling density by
\begin{equation}\label{dens}
D^+ := \limsup_{r\to+\infty} \frac{n^+(r)}{r},
\end{equation}
where $k^+(r)$ denotes the maximal number of elements $\gamma_k$
contained in an arbitrary interval of length $r$.\\

Then $\{ e^{i\gamma_k t} \}_{k\in\mathbb Z}$ satisfies the upper Ingham-type inequality for every bounded interval \(I\).
If, in addition, \(|I|>2\pi D^+\), then the lower inequality also holds 
\begin{equation*}
c_1 \sum |a_k|^2
\le
\int_I \left| \sum a_k e^{i\gamma_k t} \right|^2 dt
\le
c_2 \sum |a_k|^2
\end{equation*}
for some constants $c_1,c_2>0$ and for every finitely supported sequence
$(a_n)\subset \mathbb C$.
If $|I| < 2\pi D^+$, the above inequality fails.
\end{teoA}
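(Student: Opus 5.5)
This is Beurling's theorem as stated in \cite[Theorem 9.2]{komo}. I would reproduce its proof in three steps: the upper inequality, the lower inequality under $|I|>2\pi D^+$, and the sharpness claim. By translation invariance of $\int_I|\sum a_ke^{i\gamma_kt}|^2dt$, I may assume $I=(-R,R)$ with $2R=|I|$.

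\emph{Upper inequality.} I would use Ingham's weight argument. Take $\delta$ from \eqref{gap} and choose $\rho>0$ with $\rho<\delta$, so that the gap exceeds $\rho$. On the short interval $(-\pi/\rho,\pi/\rho)$ use the weight $h(t)=\cos(\rho t/2)$, extended by $0$. Its Fourier transform $\widehat h(\xi)$ is explicit and decays like $\xi^{-2}$. Expanding $\int h\,|\sum a_ke^{i\gamma_kt}|^2$ gives the double sum $\sum_{j,k}a_j\overline{a_k}\,\widehat h(\gamma_j-\gamma_k)$. The off-diagonal terms are controlled by Schur's test, because $\sum_{k\neq j}|\gamma_j-\gamma_k|^{-2}\le 2\sum_{m\ge1}(m\delta)^{-2}<\infty$ by the gap condition. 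This gives the upper bound on an interval of length $2\pi/\rho$. Covering an arbitrary bounded $I$ by finitely many translates of that interval, and using translation invariance (a translate only multiplies each $a_k$ by a unimodular factor), gives the bound with $c_2=c_2(\delta,|I|)$ for every bounded $I$.

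\emph{Lower inequality.} This is the main obstacle, since the gap $\delta$ alone only gives the lower bound for $|I|>2\pi/\delta$, and $1/\delta$ can be much larger than $D^+$. I would follow Beurling's construction. Fix $D'$ with $D^+<D'<|I|/2\pi$. By definition of $D^+$, there is $L$ such that every interval of length $L$ contains at most $D'L$ of the $\gamma_k$. The first attempt is to build a weight $h\ge0$ supported in $\overline I$ with $\widehat h$ strictly positive near $0$ and small at large frequencies. Then $\int_I |f|^2 \ge \int h|f|^2$, and the right-hand side is bounded below by a sum of local quadratic forms over blocks of consecutive exponents, plus cross terms between distant blocks made small by the decay of $\widehat h$. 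The delicate point is that inside each block the exponents are not $2\pi/|I|$-separated, so a pointwise Ingham bound cannot work there. Instead, the local form $\sum_{j,k\in\text{block}}a_j\overline{a_k}\widehat h(\gamma_j-\gamma_k)$ must be shown uniformly positive definite. I would obtain this from a compactness argument: the uniform gap $\delta$ and the bound of at most $D'L$ points per block restrict the admissible block configurations to a compact set. Each configuration has a positive definite form, because the exponentials are linearly independent on an interval and $\widehat h$ is of positive type. If this compactness step proves awkward, the fallback is the Kahane/Beurling route, whose key ingredient I would assume rather than re-prove. That route takes an entire function of exponential type $<|I|/2$, bounded on $\mathbb R$, that vanishes on $\{\gamma_k\}$ with controlled growth; its existence under the density condition is a Beurling--Malliavin-type density result. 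The biorthogonal functions built from it by division have uniformly bounded $L^2(I)$ norms, and duality with the upper inequality then gives the lower bound. In the application to Proposition \ref{observa} one may simply note that the $\gamma_k$ have asymptotic gap $\kappa_\alpha\pi=1/D^+$, so Ingham's theorem with asymptotic gap, combined with Haraux's finite-perturbation lemma, already suffices there.

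\emph{Failure for $|I|<2\pi D^+$.} I would sketch Landau's necessary density argument. If the lower inequality held, the family $\{e^{i\gamma_kt}\}$ would be a Riesz sequence in $L^2(I)$. However, for $r$ large a window of length $r$ contains about $D^+r$ of the $\gamma_k$. This exceeds the number $\approx |I|r/2\pi$ of eigenvalues close to $1$ of the time--frequency concentration operator, which gives a contradiction.
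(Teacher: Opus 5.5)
The paper gives no proof of this theorem; it is quoted from \cite[Theorem 9.2]{komo}. For its only use, in Proposition~\ref{observa}, your closing remark is correct: $\gamma_{k+1}-\gamma_k\to\kappa_\alpha\pi$, so Ingham's theorem with asymptotic gap already gives the lower bound for $T>2/\kappa_\alpha=4/(2-\alpha)$. Your upper inequality is fine in outline. Since $\cos(\rho t/2)$ vanishes at $\pm\pi/\rho$, you get the bound on a strictly smaller interval where $h\ge c>0$ and then cover; the restriction $\rho<\delta$ is not needed.

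\textbf{The gap is the lower inequality,} which is the whole content of Beurling's theorem. In your main route the hypothesis $|I|>2\pi D^+$ never does any work. The bound of $D'L$ points per block already follows, up to a constant, from the gap alone, since a block of length $L$ has at most $L/\delta+1$ points. Positive definiteness of each local form $\sum_{j,k\in B}a_j\overline{a_k}\,\widehat h(\gamma_j-\gamma_k)=\int h\,\bigl|\sum_{j\in B}a_je^{i\gamma_jt}\bigr|^2dt$ holds for every nontrivial $h\ge0$. So if the argument were sound, it would give the lower inequality on every interval, contradicting the last assertion of the statement. It breaks in two concrete places. First, cross terms between \emph{adjacent} blocks are not small: the last exponent of one block and the first of the next may be only $\delta$ apart, so $\widehat h(\gamma_j-\gamma_k)$ is of the size of $\widehat h(0)$. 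Second, compactness gives a local constant $c(L,h)$ with no quantitative control, whereas you need it to dominate the interaction terms uniformly in $L$. An $L$-independent lower bound for the local forms, when blocks carry at most $D'L$ points with $D'<|I|/2\pi$, is essentially the theorem itself.

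\textbf{The fallback does not close the gap either.} You take the key entire function as a black box, and uniform bounds on the quotients also require $|G'(\gamma_k)|\ge c>0$, which a Beurling--Malliavin multiplier does not provide. Beyond that, the final deduction is invalid. Uniformly bounded biorthogonal functions give only $|a_k|\le C\|f\|_{L^2(I)}$ for each $k$, not $\sum_k|a_k|^2\le C\|f\|_{L^2(I)}^2$. For the latter you need the biorthogonal family to be a Bessel sequence, equivalently bounded $\ell^2$ interpolation on $\{\gamma_k\}$ in the Paley--Wiener space. The upper inequality for the exponentials says nothing about the functions $G(x)/(G'(\gamma_k)(x-\gamma_k))$. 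For instance, $\lambda_n=n+\frac14\operatorname{sign}n$ gives a separated, complete, uniformly minimal exponential system in $L^2(-\pi,\pi)$ that is not a Riesz basis.

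Given such a $G$ of type $\tau<|I|/2$, the repair is to spend the slack in the type. Use $G(x)\phi(x-\gamma_k)/(G'(\gamma_k)(x-\gamma_k))$ with $\phi(0)=1$, $\widehat\phi$ supported in $[-\varepsilon,\varepsilon]$, and $\tau+\varepsilon<|I|/2$. These functions decay like $(1+|x-\gamma_k|)^{-N}$, so Schur's test gives the Bessel property and duality then yields the lower bound. But the existence of $G$ is where all the difficulty sits, so as written the lower inequality is not proved.

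Your necessity sketch is Landau's argument and is acceptable as an outline. The real work there is the step that turns the Riesz property into $n^+(r)-o(r)$ eigenvalues of the concentration operator near $1$, via localization of the dual functions.
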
 

\begin{defA}[The Friedrichs Extension]\label{fried}
Suppose $S_{\min}:D_{\min}\subset L^2_\beta(0,1)\rightarrow L^2_\beta(0,1)$ is bounded below. Let $D(S_F)$ denote the set of all $u\in D_{\max}$ for which there exists a sequence $\{u_n\in D_{\min}: n\in\mathbb{N}\}$ such that
\begin{enumerate}
\item $u_n \to u$ in $L^2_\beta(0,1)$ as $n\to\infty$,
\item $\langle S_{\min}(u_n-u_m),\,u_n-u_m\rangle_{L^2_\beta(0,1)}\to 0$ as $n,m\to\infty$.
\end{enumerate}
Define the operator $S_F$ by
\[
S_Fu= S_{\max}u, \qquad u\in D(S_F).
\]
Then $S_F$ is called the Friedrichs extension of $S_{\min}$. According to the well known result of Friedrichs \cite{Fried}, $S_F$ is a self-adjoint operator with the same lower bound as $S_{\min}$.
\end{defA}

\end{document}